\title{The prime divisors of the period and index of a Brauer class}
\author{Benjamin Antieau and Ben Williams}
\definecolor{td}{rgb}{1,0,0}
\definecolor{ex}{rgb}{0,0.2,0.5}
\newtheorem{proposition}{Proposition}
\newtheorem{theorem}[proposition]{Theorem}
\newtheorem{lemma}[proposition]{Lemma}
\newtheorem*{theoremu}{Theorem}
\newcommand{\GL}{\operatorname{GL}}
\newcommand{\SL}{\operatorname{SL}}
\newcommand{\Gm}{\mathbb{G}_m}
\newcommand{\PGL}{\operatorname{PGL}}
\newcommand{\PSL}{\operatorname{PSL}}
\newcommand{\Hoh}{\mathrm{H}}
\newcommand{\Br}{\operatorname{Br}}
\newcommand{\ind}{\operatorname{ind}}
\newcommand{\per}{\operatorname{per}}
\providecommand{\deg}{\operatorname{deg}}
\newcommand{\Aut}{\operatorname{Aut}}
\newcommand{\iso}{\cong}
\newcommand{\ZZ}{\mathbb{Z}}
\newcommand{\tensor}{\otimes}
\newcommand{\Supp}{\operatorname{Supp}}
\newcommand{\Hom}{\operatorname{Hom}}
\newcommand{\End}{\operatorname{End}}
\newcommand{\diag}{\operatorname{diag}}
\newcommand{\Spec}{\operatorname{Spec}}
\newcommand{\Mat}{\operatorname{Mat}}
\newcommand{\et}{\textit{\'et}}
\newcommand{\cat}[1]{\mathbf{#1}}
\newcommand{\extended}[1]{}
\newcommand{\concise}[1]{#1}
\newcommand{\noconcise}[1]{#1} 
\begin{document}

\begin{abstract}
    We show that in locally-ringed connected topoi the primes dividing the period and index of a Brauer class coincide.
    The result applies in particular to Brauer classes on connected schemes, algebraic stacks, topological spaces and to
    the projective representation theory of profinite groups.
\end{abstract}
\maketitle

\section{Introduction}

If $k$ is a field, then the Brauer group, $\Br(k)$, is the group of equivalence classes of central simple $k$--algebras modulo Morita equivalence. A theorem of
Wedderburn's states that every central simple $k$--algebra $A$ is isomorphic to a matrix algebra $\Mat_n(D)$, where $D$ is a finite-dimensional central $k$--division algebra. Since the rings $D$ and $\Mat_n(D)$ are Morita equivalent, the Brauer group is identified with
the set of isomorphism classes of finite-dimensional $k$--division-algebras.  The Brauer group was introduced by Brauer in the 1930s, and
has been studied extensively since\extended{---the monograph of Gille and Szamuely~\cite{gille2006} is a good reference}.

For a class $\alpha \in \Br(k)$, one defines the period $\per(\alpha)$ to be its order as a group element. If $A$ is a
central simple $k$--algebra, write $\alpha = [A]$ for the associated class in the Brauer group; the integer
$\per(\alpha)$ is the smallest positive integer such that $A^{\otimes_k \per(\alpha)}\iso\Mat_n(k)$ for some integer
$n$. The index, $\ind(\alpha)$, is the greatest common divisor of the degrees---the square-roots of the dimensions over
$k$---of the central simple algebras in the class $\alpha$.

It is not hard to show that $\per(\alpha)|\ind(\alpha)$.
Three additional facts about $\per(\alpha)$ and $\ind(\alpha)$ concern us in this paper, all of
which are classical and can be found in~\cite{gille2006}:
\begin{enumerate}
\item \label{1} $\ind(\alpha)$ is the degree of the lowest-dimensional element of $\alpha$, namely the unique division
  algebra $D$ with $[D]=\alpha$;
\item \label{2} one may use Galois splitting fields and Sylow subgroups of Galois groups to prove that $\per(\alpha)$ and
  $\ind(\alpha)$ have the same prime divisors;
\item \label{3} as a consequence, there exists a central simple algebra $A$, specifically the unique division algebra $D$ in
  the class $\alpha$, with class $\alpha$ such that $\deg(A)$ has the same prime divisors as $\per(\alpha)$.
\end{enumerate}

Work of Azumaya~\cite{azumaya1951} and then of Auslander and Goldman~\cite{auslander1960} established the notion of
an \textit{Azumaya algebra} over a commutative ring $R$, and defined $\Br(R)$ as a group of equivalence classes of
Azumaya algebras, generalizing the Brauer group of a field. These Azumaya algebras are flat families of central simple
algebras.  The idea was extended by Grothendieck~\cite{grothendieck1968-a} to the case of a locally-ringed topos,
$(\cat{X},R)$, although the emphasis in that work was on the specific case where the topos is $\tilde X_{\text{\'et}}$,
the \'etale topos of a scheme $X$, and where the local ring is $\mathscr{O}_X$, the structure sheaf of $X$.\extended{ The
definition of Azumaya algebras and $\Br(\cat{X}, R)$, when applied to the \'etale topos of $\Spec R$, locally ringed by
the structure sheaf, specializes to the definitions of Auslander and Goldman.}

In the generality of a locally ringed topos, it is possible to define the period and the index of a class $\alpha \in \Br(\cat{X}, R)$, although
one must allow for pathologies if $\cat{X}$ is badly disconnected.\extended{ For instance $\Br(\cat{X},R)$ need not be a torsion
group in general, so $\per(\alpha)$ may be infinite.} Unless otherwise stated, we assume $\cat{X}$ is connected\extended{, an
assumption which greatly simplifies the theory and costs very little applicability}.  We define $\per(\alpha)$ as the
order of $\alpha$, which is finite under this assumption, and we may define
\begin{equation*}
    \ind(\alpha)=\gcd\left\{\deg(A) :[A]=\alpha\right\}.
\end{equation*}
We wish to determine whether the statement $\per(\alpha)| \ind(\alpha)$ and the analogues of \eqref{1}--\eqref{3} above
hold in general.

Of these, $\per(\alpha) | \ind(\alpha)$ is generally seen to be true, whereas we have already proved in~\cite{antieau????} that
\eqref{1} does not always hold. \extended{Namely, we showed that there exists a smooth affine complex $6$-fold $X$ and a
Brauer class $\alpha\in\Br(X)$ with $\per(\alpha)=2$ and $\ind(\alpha)=2$, but where there is no degree $2$ Azumaya
algebra defined on $X$ with class $\alpha$. In more recent work,  \cite{antieau2013}, we extended the arguments of~\cite{antieau????} to
give examples of this failure with $\per(\alpha)=p$ for every prime $p$. Our smallest example is $6$-dimensional. In a
positive direction, it is known that if $X$ is a regular noetherian $2$-dimensional scheme, then \eqref{1} holds: there
exists an Azumaya algebra of degree equal to $\per(\alpha)$. In the affine case, this follows from the proof
of~\cite{auslander1960}*{Proposition 7.4}, and the general case is similar.}

Property \eqref{3} was known to hold in the following situations: the classical case of the \'etale sites of fields, and
the \'etale sites of regular noetherian $2$--dimensional schemes, \cite{auslander1960}, and the \'etale sites of
schemes $X$ that are unions of two affine schemes along an affine intersection, where it was deduced by~\cite{gabber1981}*{Chapter
  II}. To our knowledge, no other results along the lines of \eqref{3} were known for schemes.

If property \eqref{3} obtains, then property \eqref{2} must obtain as well.\extended{ We asked in~\cite{antieau2011}*{Problem 1.8} when
property \eqref{2} holds and we are aware of some additional cases where it was known to hold where \eqref{3} was not
known.} By~\cite{antieau2011}*{Theorem 3.1}, \eqref{2} holds for finite CW complexes. The proof employs the Hurewicz isomorphism
theorem and twisted topological $K$--theory, and is peculiar to the case of CW complexes. If $X$ is a regular noetherian scheme, then we showed
in~\cite{antieau2013}*{Proposition 6.5} that the period and index have the same prime divisors. This time, the proof was by
using the inclusion $\Br(X)\subseteq\Br(K)$, where $K$ is the field of fractions of $X$, and an argument of Saltman to
show that the index of $\alpha\in\Br(X)$ is the same when computed over $X$ or over $K$.


\extended{In this paper, we consider the common generalization of Azumaya algebras in the \'etale topology on a scheme and in the
ordinary topology on a CW complex: the theory of Azumaya algebras in a locally ringed topos.}

Azumaya algebras in locally ringed topoi also generalize the special case of projective representations of finite
groups. Given a finite group, $G$, we may
form the topos $\cat{B}G$ of discrete $G$--sets, and endow it with a local ring $R$. \extended{In this topos, an Azumaya algebra of degree $n$ is
tantamount to a representation $\rho: G \to \PGL_n(R)$.} When $R = \mathbb{C}$, the Brauer group of the locally ringed
topos $\Br(\cat{B}G, R)$ is the Schur multiplier $\Hoh^2(G,\mathbb{C}^\times)$ of $G$.
In this setting, \eqref{1} is known not to hold. Higgs communicated to us that
$\mathrm{PSL}_2(\mathbb{F}_7)$ has Brauer group $\ZZ/2$, where the non-zero class, $\alpha$, is represented
by irreducible projective representations of degrees $4, 4, 6, 6, 8$, so that
$\per(\alpha)=\ind(\alpha)=2$, but there is no degree-$2$ Azumaya algebra with class
$\alpha$. On the other hand, \eqref{2} is known to hold for all finite $G$, and
\eqref{3} is known to hold when $G$ is a finite $p$--group, \cite{higgs1988}.

In this paper, we establish \eqref{3}, and therefore \eqref{2}, under mild hypotheses on the topos. This solves Problem
1.8 of~\cite{antieau2011}, where to our knowledge the question of whether \eqref{2} holds in full generality was first
posed.  \extended{
\begin{theoremu}[Theorem~\ref{th:main}]
  Let $(\cat{X},R)$ be a connected locally ringed topos, and let $\alpha\in\Br(\cat{X},R)$. There exists an Azumaya algebra over
  $(\cat{X},R)$ the degree of which is divisible only by those primes dividing $\per(\alpha)$. In particular, $\per(\alpha)$ and
  $\ind(\alpha)$ have the same prime divisors.
\end{theoremu}}

\extended{While the result is stated in a general and abstract language, the proof when it comes is simple, being little more than the
construction of homomorphisms between projective linear groups. It would be possible to give
a different, but conceptually identical, proof in the language of twisted sheaves.}

The paper therefore provides, in the first place, a unified proof of a statement that had previously been
known only by different arguments in different contexts. In the second place, it covers the cases of the \'etale site on
singular or non-noetherian schemes and the case of infinite CW complexes. In the third, it strengthens the result of
Saltman for regular noetherian schemes by producing an Azumaya algebra the degree of which is divisible only by primes
dividing the period.

The main theorem, Theorem \ref{th:main}, is not stated in maximum generality; it holds for instance if the hypothesis that $\cat{X}$ be
connected is weakened to the hypothesis that $\pi_0(\cat{X})$ be compact. In
\cite{gabber1981}, a more general
definition of Azumaya algebra than that of \cite{grothendieck1968-a} is given, appertaining to the case of a ringed
topos. The two definitions coincide in the cases of \'etale sites of schemes and in the case of CW complexes locally
ringed by the sheaf of continuous complex-valued functions. We do not explore an expansion of Theorem \ref{th:main}
to the generality of the Azumaya algebras of \cite{gabber1981}.

We would like to thank the referee, whose advice greatly improved the presentation.

\section{Azumaya Algebras in Grothendieck Topoi}

\extended{
In this section and the next, we \concise{recall}\extended{present} the theories of Azumaya algebras and of $\PGL_n$--bundles in a locally-ringed
connected Grothendieck topos, and show that they are equivalent.\extended{ We claim no originality for this material. We assure
the reader that the abstractions of these two sections are embodied in several down-to-earth examples: a short list of
applications is given in section \ref{examples}, after the exposition of the theory.}
\extended{}
Our reference for the theory of topoi is \cite{artin1972-b}, and we adopt the theory of universes of \cite{artin1972-b}*{Expos\'e I,
Appendice}. We refer the reader also to \cite{giraud1971-a}, especially Chapter 0 for the set- and topos-theoretic
preliminaries, to Chapter III for the theory of torsors in a site, and to Chapter V.4 for the theory of the Brauer group
in a site.
\extended{
\concise{We follow these references in assuming}\extended{We assume} the existence of an uncountable universe, $\mathcal{U}$\extended{, the elements of which will be called \textit{small} sets. A category
$\cat{C}$ is a \textit{locally small} category if the sets of morphisms $\cat{C}(X,Y)$ are small. A locally small category, $\cat{C}$,
equipped with a Grothendieck topology $\tau$ is a locally small site if there is a small set $\mathcal{X} = \{X_i\}_{i \in I}$ of objects in
$\cat{C}$ such that every object of $\cat{C}$ has a covering family consisting of morphisms the sources of which are in $\mathcal{X}$, see
\cite{artin1972-b}*{Expos\'e II}. A small sheaf or presheaf is a sheaf or presheaf of small sets. A locally small Grothendieck
\textit{topos}, \cite{artin1972-b}*{Expos\'e IV}, is a category equivalent to a category of small sheaves of on some
locally small site.}\concise{ All the sets, sheaves we encounter in the sequel will be small, all categories locally small.}
\extended{
We shall not have occasion to discuss non-small sets or sheaves or presheaves, or non-locally-small categories, sites
or topoi. We drop these modifiers throughout and write $\cat{Set}$ etc.~for the category of small sets. A small
category, site or topos is a (locally small) category, site or topos the objects of which form a set.} We omit the modifier
`Grothendieck' in `Grothendieck topos'.}

\extended{Topoi admit several characterizations, one of which we use freely: a topos, $\cat{X}$, is a category such
that the canonical topology endows $\cat{X}$ with the structure of a site for which every sheaf is representable,
\cite{artin1972-b}*{Expos\'e IV, Th\'eor\`eme 1.2}. This implies that the Yoneda map $\eta$ which sends an object $Y$ of $\cat{X}$ to the
presheaf $\cat{X}(\cdot, Y)$ is full, faithful and essentially surjective  onto the subcategory of sheaves for the canonical topology
$\cat{Sh}_{\text{can}}(\cat{X})$ of $\cat{Pre}(\cat{X})$. By \cite{maclane1998}*{Chapter IV, Theorem 4.1}, it is an equivalence of categories
$\cat{X} \to \cat{Sh}_{\text{can}}(\cat{X})$. There is an adjunction
\[ \tilde \cdot : \cat{Pre}(\cat{X}) \rightleftarrows \cat{X}: \eta \] 
where $\eta$ is the Yoneda embedding \concise{of $\cat{X}$ in the category of presheaves on $\cat{X}$}, and the left adjoint functor $Y
\mapsto \tilde Y$, which we call \textit{sheafification} in an abuse of terminology, commutes with finite limits. 

All topoi are closed under
taking small limits and small colimits, and for any two objects $Y_1, Y_2 \in \cat{X}$, the mapping-presheaf $U\mapsto \cat{Set}(
\eta_{Y_1}(U), \eta_{Y_2}(U))$ is a sheaf, denoted ${(Y_2)}^{Y_1}$. We write $\emptyset$ for a colimit of the empty diagram and $\ast$ for a
limit of the empty diagram.}

\extended{If $\{ U_i \to V\}_{i \in I}$ is a set of maps in a topos $\cat{X}$, we say that the $U_i$ cover $V$ if the induced map $\coprod_{i \in I} U_i
\to V$ is an epimorphism.}

\extended{A \textit{geometric morphism} of topoi $f: \cat{X} \to \cat{Y}$ is an adjoint pair of functors
\[ f^* : \cat{Y} \rightleftarrows \cat{X} : f_* \] such that $f^*$ commutes with finite limits. A \textit{point} $p$ of a topos $\cat{X}$ is
a geometric morphism $p: \cat{Set} \to \cat{X}$. A topos $\cat{X}$ is said to have \textit{enough points} if there exists a set of points
$\{p_i\}_{i \in I}$ such that a map $g:Y \to Y'$ in $\cat{X}$ is an isomorphism if and only if $p^*_i(g)$ is an isomorphism for all $i \in
I$. The set $p_i^*(Y)$ is the \textit{stalk} of $Y$ at $p_i$.} In general, the topoi we encounter shall all have enough points, although this is inessential to the argument.

\extended{We abuse notation and write $Y$ for both an object in $\cat{X}$ and for the presheaf it represents under the fully faithful Yoneda
embedding, $\eta$, so that if $A$ and $Y$ are objects of $\cat{X}$, the notation $A(Y)$ means $\cat{X}(Y, A)$. In order to define a morphism
$f: X \to Y$
in $\cat{X}$, it suffices to define a morphism of presheaves $f: X(\cdot) \to Y(\cdot)$, and to define this it suffices to define maps of
sets $f(U): X(U) \to Y(U)$ as $U$ ranges over the objects in $\cat{X}$, and to show that the definition of $f$ is functorial in $U$. We refer
to this as arguing with elements.}

\extended{A group object in $\cat{X}$ is an object $G$ of $\cat{X}$ equipped with a
multiplication $\mu: G \times G \to G$, an inverse $\iota: G \to G$ and a unit $e: \ast \to G$ making the usual diagrams of group theory
commute. Equivalently, up to unique isomorphism in $\cat{Pre}(\cat{X})$, a group object $G$ of $\cat{X}$ is a representable presheaf $G :
\cat{X}^{\text{op}} \to \cat{Grp}\to\cat{Set}$. In order to specify a homomorphism of group objects
$\psi: G \to H$ in $\cat{X}$ it suffices to specify a natural transformation of group-valued contravariant functors $\psi(\cdot): G(\cdot)
\to H(\cdot)$ on $\cat X$. We define abelian group objects, ring objects etc.~similarly. In the sequel we shall write `group' for `group
object', and `abelian group' for `abelian group object' and so on when no confusion is likely to occur.}

Given a ring object $R$, assumed throughout to be unital and associative, we may form the group $R^\times$ of units in $R$\extended{ as the
limit
\[ \xymatrix{ R^\times \ar[r] \ar[d] & \ast \ar^1[d] \\ R \times R \ar^\mu[r] & R. } \]
There is a composite morphism $u: R^\times \to R \times R \overset{\pi_1}{\to} R$ given by projection on the first factor. Since the definition
of $R^\times$ is as a limit, and the formation of limits commutes with the Yoneda embedding, it follows that for all objects $U$ of
$\cat{X}$, the map of sets $u: R^\times(U)  \to R(U)$ is an injection with image $R(U)^\times$. In particular, $u: R^\times \to R$ is a submonoid of the
multiplicative monoid structure on $R$, and arguing with elements of $R^\times(U)$ we see that $R^\times$ is a group}.

\extended{If $U$ is an object of $\cat{X}$ and $f \in R(U)$, we define $U_f$ to be the largest subobject of $U$ in which the image of $f$ is
invertible; it is the pull-back:
\[ \xymatrix{ U_f \ar[r] \ar[d] & R^\times \ar^u[d] \\ U \ar^f[r] & R.} \] A ringed topos, $(\cat{X}, R)$, is
\textit{locally-ringed} if the ring object $R$ is commutative for any object $U$ in
$\cat{X}$, and for any $f \in R(U)$, the
objects $U_f$ and $U_{1-f}$ cover $U$---see}\concise{There is a notion of local ring object, see} \cite{artin1972-b}*{Expos\'e IV, Exercice 13.9} or \cite{grothendieck1968-a}*{Section
2} and \cite{maclane1992-a}*{Chapter VIII}. \extended{If $p$ is a point of $X$ and $R$ is a local ring object, then
the ring $p^*(R)$ has the property that for all elements $f \in p^*(R)$, either $f$ or $(1-f)$ is a unit. Consequently,
either $p^*R$ is empty or $p^*R$ has a unique maximal ideal.} In the presence of enough points of the topos, a ring object
is a local ring object if and only if $p^*R$ is either local or empty at all points.

\extended{If $S$ is a set, then $S$ extends in an obvious way to a constant presheaf on $\cat{X}$. The sheafification, $\tilde S$,
will be called the \textit{constant sheaf} on $S$; this is a misnomer in that $\tilde S(Y)$ is not necessarily constant as $Y$
varies. When we say two objects $Y, Z$ in $\cat{X}$ are `locally isomorphic', we mean that there is an epimorphism $U \to \ast$
onto the terminal object of $\cat{X}$ and an isomorphism $f: Y \times U \to Z \times U$ over $U$. The functor $\cat{X}
\to \cat{Sets}$ given on objects by $Y \mapsto \Hom(\ast, Y)$ is known as the \textit{global section functor}, and is
left-adjoint to the constant-sheaf functor.}

\extended{The topos $\cat{X}$ is \textit{connected} if the constant-sheaf functor is fully faithful.} The exposition is greatly
simplified if we assume $\cat{X}$ is connected, which is also the most applicable case, so all topoi we consider are
assumed connected\extended{ unless the contrary is stated}.
There is an abelian category of $R$--modules, in which one may form free- and locally-free-modules, tensor products, and
homomorphism objects.\extended{ and among the objects in this category are the free
modules of finite rank. These are isomorphic to $R^n$ where $n \ge 0$ is an integer.

Suppose $V$, $W$ are two $R$--modules, then $V \tensor_R W$ and $\Hom_R(V,W)$ may again be defined as $R$--modules. The tensor product
$V \tensor_R W$ is the sheafification of the presheaf $U \mapsto V(U) \tensor_{R(U)} W(U)$. The case of $\Hom_R(\cdot,
\cdot)$ is similar. If both arguments are free $R$--modules of finite rank, then $\Hom_R(R^n, R^m) \iso R^{nm}$.

One may define an $R$--algebra \extended{ to be an $R$--module equipped with a multiplication map $A \tensor_R A \to A$ and a structure map $R \to A$
making the usual diagrams commute}. We do not require $R$--algebras to be commutative, but we do require the action of $R$ on $A$ to be
central. The $R$--algebra $\Hom_R(V,V)$ will be written as $\End_R(V)$, and $\End_R(R^n)$ will be identified with the algebra of $n\times n$
matrices over $R$, denoted $\Mat_n(R)$.}

We recall from \cite{grothendieck1968-a} that an \textit{Azumaya algebra}, $\mathscr{A}$, on $(\cat{X}, R)$ is an
$R$-algebra in $\cat{X}$ which locally is isomorphic to an algebra of the form $\Mat_n(R)$; the integer $n$ is called the \textit{degree} of $\mathscr{A}$. The tensor product
$\mathscr{A} \tensor_R \mathscr{A}'$ is an Azumaya $R$-algebra formed by means of the Kronecker product $\Mat_n(R) \tensor \Mat_m(R)$, applied locally.

\extended{A \textit{locally free module of finite rank} on $(\cat{X}, R)$ is an $R$--module that locally is
isomorphic to $R^n$ where $n$ is an integer called the \textit{rank}.}

The \textit{Brauer group} $\Br(\cat{X},R)$ of $(\cat{X}, R)$ is the set of Azumaya algebras under the equivalence
relation that says
$\mathscr{A} \simeq \mathscr{A}'$ if there exist locally free $R$--modules $E$ and $E'$ of finite rank such that
\[ \mathscr{A} \tensor_R \End_R(E) \iso \mathscr{A}' \tensor_R \End_R(E').\]
The Brauer group is indeed a group under tensor product, with the inverse of $\mathscr{A}$ being given by the opposite
algebra\extended{, since $\mathscr{A} \tensor_R \mathscr{A}^{\text{op}} \iso \End_R (\mathscr{A})$}.

If $V$ is a free $R$--module then the exterior power $V^{\wedge d}$ \concise{may be defined in the evident way.}\extended{ is defined as the
presheaf
\[ Y \mapsto V(Y)^{\wedge d}\] 
on $\cat{X}$. It is in fact a sheaf, since $V$ is free, and is an $R$--module.
If $R^n$ is given the basis $\{e_1, \dots, e_n\}$, then $\left(R^n\right)^{\wedge
  d}$ is a free $R$--module of rank $\binom{n}{d}$ having basis consisting of wedge-products of the form
\[ e_{i_1} \wedge e_{i_2} \wedge \dots \wedge e_{i_d} \] where $1 \le i_1 < i_2 < \dots < i_d \le n$. The construction
of $V^{\wedge d}$ is functorial.}

\extended{\subsection{Aside on Topoi that are not Connected} \label{ss:disconnect1}

If $\cat{X}$ is not connected, then the nature of free and locally free $R$--modules becomes more intricate, since the
constant sheaf $\tilde{ \mathbb{Z}}_{\ge 0}$, the object in which the rank is defined, may have nonconstant
sections. 

In the case of ordinary integers, there is an order relation $T \subset \mathbb{Z}_{\ge 0} \times \mathbb{Z}_{\ge 0}$,
to wit $(a,b) \in T$ if $a\le b$. For any constant function $c:\ast \to \mathbb{Z}_{\ge 0}$, we have a map $
\mathbb{Z}_{\ge 0} \iso\mathbb{Z}_{\ge 0}\times \ast \to \mathbb{N} \times\mathbb{Z}_{\ge 0}$, given by $a\mapsto
(a,c)$. The pullback of this map along $T$ defines the initial-segment subset $I_{c} =\{1,2,\dots, c\} \subset
\mathbb{N}$. For a commutative ring $R$, one may then define $R^c$ as the free $R$--module on $I_c$; in this way there
are standard inclusions $R^0 \subset R^1 \subset \dots \subset R^c \subset \dots \subset R^{\mathbb{N}}$.

This can all be generalized to the case of a locally ringed topos, ($\cat{X}, R)$, associating to a global section $c:
\ast \to \tilde {\mathbb{Z}}_{\ge 0}$ an initial-segment object $I_{c}$, and, by applying the ordinary free-module
functor throughout, the free $R$--module $R^c$. Locally free $R$--modules are $R$--modules that are locally isomorphic
to free modules. The rank of a locally free $R$--module is not an integer in general, but a global section of
$\tilde{\mathbb{Z}}_{\ge 0}$. Azumaya algebras are $R$--algebras that are locally isomorphic to endomorphism algebras of
free $R$--modules, $\Mat_c(R)$, and their degrees are again global sections of $\tilde{\mathbb{Z}}_{\ge 0}$.

\medskip

We return to our standing assumption that $\cat{X}$ is connected.}

\extended{The object $\GL_n(R) = \GL(R^n)$ which takes $Y$
to $\GL(R^n(Y))$ is the group of units in $\Mat_n(R)$.  We define $\Gm = \GL_1 = R^\times$. The objects $\GL_n$ are groups, and $\Gm$ is an
abelian group.

Given an element $f \in \GL(R^n(Y))$, we may form corresponding elements $f \tensor \dots \tensor f \in \GL(R^n(Y)^{\tensor
  d})$ and $f \wedge \dots \wedge f \in \GL(R^n(Y)^{\wedge d})$. These give rise to \concise{diagonal} homomorphisms of groups
$\GL_n(R) \to \GL_{dn}(R)$ and $\GL_n(R) \to \GL_{\binom{n}{d}}(R)$\extended{, which we refer to in the sequel as \textit{diagonal}
homomorphisms; on the level of elements of $\GL_n(R)(U)$, $ \GL_{dn}(R)$ and $\GL_{\binom{n}{d}}(R)$:
\[ \diag(f)(v_1 \tensor \dots \tensor v_d) = f(v_1) \tensor \dots \tensor f(v_d) \]
and
\[ \diag(f)(v_1 \wedge \dots \wedge v_d) = f(v_1) \wedge \dots \wedge f(v_d). \]}\concise{.}
\extended{

\section{The Projective General Linear Group}
This section is an enlargement of \cite{grothendieck1968-a}*{\S 2}, and much of the same material appears in
\cite{giraud1971-a}*{Chapitre V, \S 4}.}

Define $\SL_n$ as the kernel of the determinant homomorphism $\Gm \to \GL_n$. There is a diagonal inclusion $\Gm \to \GL_n$; it is central and the quotient group is denoted
$\PGL_n$. The composite map $\Gm \to \GL_n \to \Gm$ is the $n$--th power map, denoted 
\[ \epsilon_n: \Gm \to \Gm. \] 
We define $\mu_n$, the group of $n$--th roots of unity, to be the kernel of
$\epsilon_n$. Denote the cokernel of $\epsilon_n$ by $\nu_n$. \extended{

There is a commutative diagram in which both rows and columns
are short exact sequences of groups in $\cat{X}$, and where those on the left and the bottom are
abelian:
\begin{equation} \label{diagram:PGPS}\xymatrix{
    & 1\ar[d] & 1\ar[d] & 1\ar[d] &\\
    1\ar[r] &\mu_n \ar[d] \ar[r] & \SL_n \ar[d] \ar[r] & \PSL_n\ar[r]\ar[d] & 1\\
    1\ar[r] & \Gm \ar[d] \ar^{\epsilon_n}[dr] \ar[r] & \GL_n \ar^{\det}[d] \ar[r] & \PGL_n \ar[d]\ar[r] & 1 \\
    1\ar[r] &\Gm/ \mu_n \ar[r]\ar[d] & \Gm \ar[r]\ar[d] & \nu_n\ar[r]\ar[d]&1\\
    &1&1&1.& }
\end{equation} 
Observe that if}\concise{If} $\nu_n \iso 1$, as often happens in cases of interest, the
canonical map $\PSL_n \to \PGL_n$ is an isomorphism.}

In a ringed topos, it is possible to define the group objects $\GL_n$, $\Gm \iso \GL_1$, $\SL_n$, $\PGL_n \iso \GL_n /
\Gm$\extended{ and $\PSL_n \iso \SL_n/\mu_n$. If the $n$--th power map $\Gm \to \Gm$ is an epimorphism, as often happens, then
$\PGL_n \iso \PSL_n$}.

If $\mathscr{A}$ is an Azumaya algebra, then it is possible to form $\Aut(\mathscr{A})$ as a group in $\cat{X}$; locally
this group is isomorphic to a group of the form $\Aut(\Mat_n(R))$. The conjugation action of $GL_n$ on $\Mat_n$ means
that there is a homomorphism $\phi: \PGL_n \to \Aut(\Mat_n(R))$.

The following proposition is asserted in \cite{grothendieck1968-a}.
}

\noconcise{
In this section and the next, we present the theories of Azumaya algebras and of
$\PGL_n$--bundles in a locally-ringed connected Grothendieck topos, and show that they are equivalent.
Our reference for the theory of topoi is \cite{artin1972-b}, and we adopt the theory of universes of \cite{artin1972-b}*{Expos\'e I,
Appendice}. Chapter 0 of \cite{giraud1971-a} is an expedited guide to the set- and topos-theoretic preliminaries.
In general, the topoi we encounter shall all have enough points, although this is inessential to the argument.
The exposition is greatly
simplified if we assume $\cat{X}$ is connected, which is also the most applicable case, so all topoi we consider are
assumed connected.

There is an abelian category of $R$--modules, in which one may form free- and locally-free-modules, tensor products, and
homomorphism objects.
\extended{If $U$ is an object of $\cat{X}$ and $f \in R(U)$, we define $U_f$ to be the largest subobject of $U$ in which the image of $f$ is
invertible; it is the pull-back:
\[ \xymatrix{ U_f \ar[r] \ar[d] & R^\times \ar^u[d] \\ U \ar^f[r] & R.} \] A ringed topos, $(\cat{X}, R)$, is
\textit{locally-ringed} if the ring object $R$ is commutative for any object $U$ in
$\cat{X}$, and for any $f \in R(U)$, the
objects $U_f$ and $U_{1-f}$ cover $U$---see}\concise{There is a notion of local ring object, see} \cite{artin1972-b}*{Expos\'e IV, Exercice 13.9} or \cite{grothendieck1968-a}*{Section
2} and \cite{maclane1992-a}*{Chapter VIII}. \extended{If $p$ is a point of $X$ and $R$ is a local ring object, then
the ring $p^*(R)$ has the property that for all elements $f \in p^*(R)$, either $f$ or $(1-f)$ is a unit. Consequently,
either $p^*R$ is empty or $p^*R$ has a unique maximal ideal.} In the presence of enough points of the topos, a ring object
is a local ring object if and only if $p^*R$ is either local or empty at all points.
Given a ring object $R$, assumed throughout to be unital and associative, we may form the group $R^\times$ of units in $R$\extended{ as the
limit
\[ \xymatrix{ R^\times \ar[r] \ar[d] & \ast \ar^1[d] \\ R \times R \ar^\mu[r] & R. } \]
There is a composite morphism $u: R^\times \to R \times R \overset{\pi_1}{\to} R$ given by projection on the first factor. Since the definition
of $R^\times$ is as a limit, and the formation of limits commutes with the Yoneda embedding, it follows that for all objects $U$ of
$\cat{X}$, the map of sets $u: R^\times(U)  \to R(U)$ is an injection with image $R(U)^\times$. In particular, $u: R^\times \to R$ is a submonoid of the
multiplicative monoid structure on $R$, and arguing with elements of $R^\times(U)$ we see that $R^\times$ is a group}.

\extended{If $S$ is a set, then $S$ extends in an obvious way to a constant presheaf on $\cat{X}$. The sheafification, $\tilde S$,
will be called the \textit{constant sheaf} on $S$; this is a misnomer in that $\tilde S(Y)$ is not necessarily constant as $Y$
varies. When we say two objects $Y, Z$ in $\cat{X}$ are `locally isomorphic', we mean that there is an epimorphism $U \to \ast$
onto the terminal object of $\cat{X}$ and an isomorphism $f: Y \times U \to Z \times U$ over $U$. The functor $\cat{X}
\to \cat{Sets}$ given on objects by $Y \mapsto \Hom(\ast, Y)$ is known as the \textit{global section functor}, and is
left-adjoint to the constant-sheaf functor.}

\extended{ and among the objects in this category are the free
modules of finite rank. These are isomorphic to $R^n$ where $n \ge 0$ is an integer.

Suppose $V$, $W$ are two $R$--modules, then $V \tensor_R W$ and $\Hom_R(V,W)$ may again be defined as $R$--modules. The tensor product
$V \tensor_R W$ is the sheafification of the presheaf $U \mapsto V(U) \tensor_{R(U)} W(U)$. The case of $\Hom_R(\cdot,
\cdot)$ is similar. If both arguments are free $R$--modules of finite rank, then $\Hom_R(R^n, R^m) \iso R^{nm}$.

One may define an $R$--algebra \extended{ to be an $R$--module equipped with a multiplication map $A \tensor_R A \to A$ and a structure map $R \to A$
making the usual diagrams commute}. We do not require $R$--algebras to be commutative, but we do require the action of $R$ on $A$ to be
central. The $R$--algebra $\Hom_R(V,V)$ will be written as $\End_R(V)$, and $\End_R(R^n)$ will be identified with the algebra of $n\times n$
matrices over $R$, denoted $\Mat_n(R)$.}

We recall from \cite{grothendieck1968-a} that an \textit{Azumaya algebra}, $\mathscr{A}$, on $(\cat{X}, R)$ is an
$R$-algebra in $\cat{X}$ which locally is isomorphic to an algebra of the form $\Mat_n(R)$; the integer $n$ is called the \textit{degree} of $\mathscr{A}$. The tensor product
$\mathscr{A} \tensor_R \mathscr{A}'$ is an Azumaya $R$-algebra formed by means of the Kronecker product $\Mat_n(R) \tensor \Mat_m(R)$, applied locally.

\extended{A \textit{locally free module of finite rank} on $(\cat{X}, R)$ is an $R$--module that locally is
isomorphic to $R^n$ where $n$ is an integer called the \textit{rank}.}

The \textit{Brauer group} $\Br(\cat{X},R)$ of $(\cat{X}, R)$ is the set of Azumaya algebras under the equivalence
relation that says
$\mathscr{A} \simeq \mathscr{A}'$ if there exist locally free $R$--modules $E$ and $E'$ of finite rank such that
\[ \mathscr{A} \tensor_R \End_R(E) \iso \mathscr{A}' \tensor_R \End_R(E').\]
The Brauer group is indeed a group under tensor product, with the inverse of $\mathscr{A}$ being given by the opposite
algebra\extended{, since $\mathscr{A} \tensor_R \mathscr{A}^{\text{op}} \iso \End_R (\mathscr{A})$}.

If $V$ is a free $R$--module then the exterior power $V^{\wedge d}$ \concise{may be defined in the evident way.}
In a ringed topos, it is possible to define the group objects $\GL_n$, $\Gm \iso \GL_1$, $\SL_n$, $\PGL_n \iso \GL_n /
\Gm$\extended{ and $\PSL_n \iso \SL_n/\mu_n$. If the $n$--th power map $\Gm \to \Gm$ is an epimorphism, as often happens, then
$\PGL_n \iso \PSL_n$}.

If $\mathscr{A}$ is an Azumaya algebra, then it is possible to form $\Aut(\mathscr{A})$ as a group in $\cat{X}$; locally
this group is isomorphic to a group of the form $\Aut(\Mat_n(R))$. The conjugation action of $GL_n$ on $\Mat_n$ means
that there is a homomorphism $\phi: \PGL_n \to \Aut(\Mat_n(R))$.

The following proposition is asserted in \cite{grothendieck1968-a}.
}

\begin{proposition}
  If $(\cat{X}, R)$ is a locally-ringed topos, then $\phi:\PGL_n\to\Aut(\Mat_n(R))$ is an isomorphism.
\end{proposition}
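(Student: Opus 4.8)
The plan is to show that $\phi$ is simultaneously a monomorphism and an epimorphism of sheaves of groups; since every topos is a balanced category, this forces $\phi$ to be an isomorphism. The monomorphism will not require the hypothesis that $R$ be local, but the local structure of $R$ is exactly what powers the epimorphism.

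For the monomorphism, I would first identify the kernel of the conjugation homomorphism $\GL_n\to\Aut(\Mat_n(R))$ by arguing with elements. Given an object $U$ and a section $g\in\GL_n(R)(U)$ acting trivially by conjugation, the relations $ge_{ij}=e_{ij}g$ for all matrix units force $g$ to be scalar: commuting with the $e_{ii}$ makes $g$ diagonal, and commuting with the off-diagonal $e_{ij}$ equalizes its diagonal entries. This is a purely algebraic computation over the commutative ring $R(U)$, so the kernel of $\GL_n\to\Aut(\Mat_n(R))$ is precisely the central subgroup $\Gm$. As $\PGL_n=\GL_n/\Gm$ by definition, the induced map $\phi$ has the same kernel as the quotient map $\GL_n\to\PGL_n$, hence is a monomorphism.

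For the epimorphism it suffices to show $\phi$ is locally surjective: given $\psi\in\Aut(\Mat_n(R))(U)$ I must produce a cover of $U$ on which $\psi$ is conjugation by a section of $\GL_n$. Set $f_{ij}=\psi(e_{ij})$; since $\psi$ is an algebra homomorphism, these again satisfy the matrix-unit relations $f_{ij}f_{kl}=\delta_{jk}f_{il}$ and $\sum_i f_{ii}=1$. Viewing the $f_{ij}$ as endomorphisms of the free module $R^n$, put $P=f_{11}R^n$. The idempotents $f_{ii}$ are orthogonal with sum $1$, and $f_{i1}$, $f_{1i}$ restrict to mutually inverse isomorphisms between $P$ and $f_{ii}R^n$; hence $R^n\iso P^{\oplus n}$, exhibiting $P$ as a direct summand of a finite free module.

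The crucial and final step is to trivialize $P$, and this is where the local-ring hypothesis enters. It guarantees that the finitely generated projective module $P$ is locally free, and the isomorphism $R^n\iso P^{\oplus n}$ forces its rank to be one; thus $P$ becomes free on some cover $U'\to U$. Over such a cover choose a generating section $v$ of $P\iso R$ and set $v_i=f_{i1}v$; then each $v_i$ generates $f_{ii}R^n$, so $\{v_1,\dots,v_n\}$ is a basis of $R^n$ and the matrix $g$ with these columns lies in $\GL_n(R)(U')$. A direct check using the matrix-unit relations gives $ge_{ij}g^{-1}=f_{ij}=\psi(e_{ij})$ for all $i,j$, and since the $e_{ij}$ generate $\Mat_n(R)$ as an $R$-algebra, $\psi=\phi(g)$ over $U'$. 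I expect this trivialization to be the main obstacle: the whole force of the hypothesis that $(\cat{X},R)$ be locally ringed is spent in passing from the abstract rank-one summand $P$ to an honest local basis of $R^n$, and without it $\psi$ need not be inner even locally.
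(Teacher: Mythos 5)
Your strategy---prove $\phi$ is a monomorphism via the centralizer computation and an epimorphism via an internal Skolem--Noether argument, then invoke that topoi are balanced---is viable, and it is genuinely different from the paper's proof, which instead reduces to the universal locally ringed topos $\Spec(\ZZ,\mathcal{O})$ (via Mac~Lane--Moerdijk) and quotes Auslander--Goldman there, where every projective module is free. Your kernel computation with matrix units is fine, as is the production of the matrix-unit system $f_{ij}=\psi(e_{ij})$ and the splitting $R^n\iso P^{\oplus n}$ with $P=f_{11}R^n$. But the step you yourself flag as the crux---``the local-ring hypothesis guarantees that the finitely generated projective module $P$ is locally free''---is asserted, not proved, and it is exactly the point where no off-the-shelf citation applies. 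The sections $R(U)$ of a local ring object are \emph{not} local rings (locality says only that for $f\in R(U)$ the subobjects $U_f$ and $U_{1-f}$ cover $U$), so the classical fact that finitely generated projectives over a local ring are free says nothing directly about $P(U)$ as an $R(U)$-module. Nor may you argue at stalks: the paper deliberately avoids assuming enough points, and its commentary on this very proposition notes that arguing inside $\cat{X}$ to mimic a local ring at a stalk requires ``ungainly maneuvering''---which is precisely why the authors resort to the artifice of the universal example.

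The gap is fillable, but it takes real work. The needed fact does admit a constructive proof (an idempotent matrix $F$ over a local ring is similar to a standard diagonal idempotent, by induction on $n$ with the case split ``$F_{11}$ invertible or $1-F_{11}$ invertible''), and that case split internalizes: $U$ is covered by $U_{F_{11}}$ and $U_{1-F_{11}}$, and finitely many iterations yield a cover of $U$ on which $F$ is conjugate to the standard rank-one idempotent, after which your basis $v_i=f_{i1}v$ and the verification $ge_{ij}g^{-1}=f_{ij}$ go through verbatim. Alternatively you could adopt the paper's shortcut: pull back along the classifying geometric morphism $r:\cat{X}\to\Spec\ZZ$ with $r^*\mathcal{O}\iso R$, note that $r^*$ commutes with the formation of $\Mat_n$, $\GL_n$, $\PGL_n$ and $\Aut(\Mat_n)$, and deduce the general case from the Zariski site of $\Spec\ZZ$, where freeness of projectives makes Auslander--Goldman's Theorem 3.6 and Proposition 5.1 directly applicable. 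As written, though, your proof has a genuine hole at its self-identified main obstacle, and the hole is exactly where the whole content of the local-ring hypothesis lives.
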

\begin{proof}
  We refer to \cite{maclane1992-a}*{Chapter VIII, Theorem 3}, which says that there is a universal locally ringed
  topos. It is $\Spec(\ZZ, \mathcal{O})$, the ringed topos associated to the Zariski site on $\Spec \ZZ$. Given any locally-ringed topos $(\cat{X}, R)$, there is a
  geometric morphism $r: \cat{X} \to \Spec \ZZ$ such that $r^* \mathcal{O} \iso R$. Since $r^*$ preserves finite limits
  and all colimits, it follows that $R^n \iso (r^* \mathcal{O})^n$, that $\End(R^n) \iso r^* \End( \mathcal{O} )$, that
  $\PGL_n(R) \iso r^* \PGL_n(\mathcal{O})$, that $\Aut(\End(R^n)) \iso r^*\Aut\left(\End(\mathcal{O}^n)\right)$, and all
  these isomorphisms are compatible with the various actions of these objects on themselves and each other.

  It suffices, therefore, to prove the proposition in the case of $(\Spec \ZZ, \mathcal{O})$. Since every projective
  $\ZZ$--module is free, the result follows from Theorem 3.6 and Proposition 5.1 of
  \cite{auslander1960}, the Skolem-Noether theorem.
%
%
%
\end{proof}

\extended{
The proof in the case where $\cat{X}$ has enough points may be carried at stalks, using the same results in
\cite{auslander1960} as were used for $\Spec \ZZ$. In the absence of enough points, arguing in $\cat{X}$ would require
some ungainly maneuvering among the objects of $\cat{X}$ in order to mimic the properties of a local ring at a stalk. It
is to avoid this that we employ the artifice of the universal example.

This proposition is the point at which it becomes necessary for the topos to be locally ringed, rather than merely
ringed, which is why we draw attention in the statement to our standing assumption that $(\cat{X}, R)$ is locally
ringed.}

There exist cohomology functors $\Hoh^i(G)$ in the topos $\cat X$, defined for $i=0,1$ in the case of a
nonabelian group $G$, but for all $i\ge 0$ in the case of an abelian group $A$ (see \cite{artin1972}*{Expos\'e V} and
\cite{giraud1971-a}).\extended{ The set $\Hoh^1(G)$ classifies $G$--torsors in the topos, and since $\Aut(\Mat_n(R)) \iso \PGL_n$,
there is a natural bijection between isomorphism classes of Azumaya algebras of degree $n$ and $\Hoh^1(\PGL_n)$. We view
elements $\mathscr{A} \in \Hoh^1(\PGL_n)$ as being Azumaya algebras of degree $n$. It is a consequence of the
properties of Kronecker product and matrix multiplication that the two definitions of $\tensor$ on the classes in
$\Hoh^1(\PGL_*)$, one given by tensor product of $R$--algebras, the other by Kronecker products of matrices,
agree.}
\extended{Associated to a short exact sequence of groups \[ 1 \to G \to G'' \to G' \to 1 \] there is a long exact sequence in
cohomology, extending to $\Hoh^2(G)$ in the case of a central extension by an abelian group $G$, so that we have, in
particular, the portion of a long exact sequence}\concise{The short exact sequence of groups
$1\rightarrow\Gm\rightarrow\GL_n\rightarrow\PGL_n\rightarrow 1$ yields a portion of an
exact sequence in nonabelian cohomology}
\begin{equation}
  \label{eq:5}
   \Hoh^1(\GL_n) \to \Hoh^1(\PGL_n) \overset{\delta}{\to} \Hoh^2(\Gm).
\end{equation}
The map $\Hoh^1(\GL_n) \to \Hoh^1(\PGL_n)$ takes a locally free $R$--module $E$ of rank $n$ to the $\PGL_n$--torsor
$\End_R(E)$.\extended{

If $G$ and $G'$ are two groups, then there is an isomorphism $\Hoh^i(G \times G') \iso \Hoh^i(G) \times \Hoh^i(G')$,
where applicable, by Giraud~\cite{giraud1971-a}*{Remarque 2.4.4}.
This endows the cohomology of an abelian group $A$ in $\cat{X}$ with an abelian group structure, and
does so in such a way that the $n$-th power map $A \to A$, which on the level of elements is $a \mapsto a^n$ if $A$ is
written multiplicatively, induces multiplication by $n$ on the additive abelian groups
$\Hoh^*(A)$.

Writing $\tensor$ for the Kronecker product $\GL_n \tensor \GL_m \to \GL_{nm}$, and for the induced product $\PGL_n \tensor
\PGL_m \to \PGL_{nm}$,we have a commutative diagram of short exact sequences of groups:
\begin{equation*}
  \xymatrix{ 1 \ar[r] & \Gm \times \Gm \ar[r] \ar_{\text{mult}}[d] & \GL_n \times \GL_m \ar[r] \ar_{\tensor}[d] &
    \PGL_n \times \PGL_m \ar[r] \ar_{\tensor}[d] & 1 \\ 1 \ar[r] & \Gm \ar[r] & \GL_{nm} \ar[r] & \PGL_{nm} \ar[r] & 1.} 
\end{equation*}
In particular, this means that }
\concise{We have}
\begin{equation\concise{*}}
    \extended{\label{eq:4}}
  \delta(\mathscr{A} \tensor \mathscr{A}') = \delta(\mathscr{A}) + \delta(\mathscr{A}').
\end{equation\concise{*}}

The following proposition holds in general, the proof being the same is in the case of the \'etale topos of a
scheme. It is also implicit in the discussion of \cite{giraud1971-a}*{Chapitre V.4.4--5}.

\begin{proposition} \label{p:BrtoH2Gm}
  If $(\cat{X}, R)$ is a connected, nonempty, locally-ringed topos, then $\Br(\cat X, R)$
  can be identified with the image of the map
  \begin{equation\concise{*}}
      \extended{\label{eq:1}}\coprod_{n=1}^\infty \Hoh^1 (\PGL_n) \to \Hoh^2(\Gm).
    \end{equation\concise{*}}
\end{proposition}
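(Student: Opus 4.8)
The plan is to realize the map in question as the one induced on Brauer classes by the boundary map $\delta$ of \eqref{eq:5}, and to show that its fibres are exactly the Brauer equivalence classes. Recall from the preceding discussion that $\coprod_{n=1}^\infty \Hoh^1(\PGL_n)$ is identified with the set of isomorphism classes of Azumaya algebras on $(\cat{X}, R)$, with $\Hoh^1(\PGL_n)$ consisting of those of degree $n$, and that the map to $\Hoh^2(\Gm)$ sends an algebra $\mathscr{A}$ to $\delta(\mathscr{A})$. There is a tautological surjection $q$ from this set onto $\Br(\cat{X}, R)$, sending each Azumaya algebra to its Brauer class. It therefore suffices to prove that $\delta(\mathscr{A}) = \delta(\mathscr{A}')$ if and only if $\mathscr{A}$ and $\mathscr{A}'$ define the same element of $\Br(\cat{X},R)$; the proposition is then the factorization of $\delta$ through $q$.

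First I would record the two facts about $\delta$ that drive both implications. From the exactness of \eqref{eq:5}, the kernel of $\delta$ (that is, the fibre over the distinguished point of $\Hoh^2(\Gm)$) is precisely the image of $\Hoh^1(\GL_n) \to \Hoh^1(\PGL_n)$, which consists of the endomorphism algebras $\End_R(E)$ of locally free $R$--modules $E$ of finite rank; in particular $\delta(\End_R(E)) = 0$. Combining the additivity of $\delta$ under tensor product with the isomorphism $\mathscr{A} \tensor_R \mathscr{A}^{\mathrm{op}} \iso \End_R(\mathscr{A})$ gives $\delta(\mathscr{A}^{\mathrm{op}}) = -\delta(\mathscr{A})$.

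For the forward implication, suppose $\mathscr{A}$ and $\mathscr{A}'$ are Brauer equivalent, so $\mathscr{A} \tensor_R \End_R(E) \iso \mathscr{A}' \tensor_R \End_R(E')$ for some locally free $E, E'$. Applying $\delta$ and using additivity together with $\delta(\End_R(E)) = \delta(\End_R(E')) = 0$ yields $\delta(\mathscr{A}) = \delta(\mathscr{A}')$. For the converse, suppose $\delta(\mathscr{A}) = \delta(\mathscr{A}')$ and set $\mathscr{B} = \mathscr{A} \tensor_R \mathscr{A}'^{\mathrm{op}}$. By additivity and $\delta(\mathscr{A}'^{\mathrm{op}}) = -\delta(\mathscr{A}')$ we get $\delta(\mathscr{B}) = 0$, so by the exactness just recalled $\mathscr{B} \iso \End_R(E)$ for some locally free $E$ of finite rank. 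Tensoring with $\mathscr{A}'$ and using $\mathscr{A}'^{\mathrm{op}} \tensor_R \mathscr{A}' \iso \End_R(\mathscr{A}')$ transforms this into an isomorphism $\mathscr{A} \tensor_R \End_R(\mathscr{A}') \iso \mathscr{A}' \tensor_R \End_R(E)$; since $\mathscr{A}'$ is itself a locally free $R$--module of finite rank, this is exactly the Brauer equivalence relation, and we conclude $\mathscr{A} \simeq \mathscr{A}'$.

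The main obstacle is the converse implication, and within it the exactness of the nonabelian cohomology sequence \eqref{eq:5} at $\Hoh^1(\PGL_n)$: I must be sure that a degree-$n$ Azumaya algebra killed by $\delta$ is genuinely an endomorphism algebra, not merely abstractly trivial in cohomology, and that this holds uniformly as $n$ varies (the degree of $\mathscr{B}$ being $\deg(\mathscr{A})\deg(\mathscr{A}')$). The remaining algebraic manipulations are formal consequences of the Morita-type identities $\mathscr{A} \tensor_R \mathscr{A}^{\mathrm{op}} \iso \End_R(\mathscr{A})$, which hold locally and hence globally. Connectedness and nonemptiness enter only to guarantee that the degree is a well-defined positive integer, so that the indexing of the coproduct by $n \ge 1$ is meaningful and the trivial algebra $R$ furnishes the distinguished point mapping to $0 \in \Hoh^2(\Gm)$.
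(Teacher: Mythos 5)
Your proposal is correct and takes essentially the same route as the paper: the map is factored through the tautological surjection of $\coprod_{n=1}^\infty \Hoh^1(\PGL_n)$ onto $\Br(\cat{X},R)$ via the additivity of $\delta$ and its vanishing on endomorphism algebras, with injectivity of the induced homomorphism $\Br(\cat{X},R)\to\Hoh^2(\Gm)$ coming from the exactness of \eqref{eq:5} at $\Hoh^1(\PGL_n)$. The only difference is one of detail: you write out the opposite-algebra argument ($\delta(\mathscr{A}\tensor_R\mathscr{A}'^{\mathrm{op}})=0$, hence $\mathscr{A}\tensor_R\mathscr{A}'^{\mathrm{op}}\iso\End_R(E)$, hence Brauer equivalence after tensoring with $\mathscr{A}'$) in full, where the paper compresses it into the single assertion that $\delta(\mathscr{A})$ is trivial if and only if $\mathscr{A}\iso\End_R(E)$ for a locally free $R$--module $E$.
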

\extended{\begin{proof}
  The hypotheses on $\cat X$ ensure that all Azumaya algebras represent elements in some cohomology group
  $\Hoh^1(\PGL_n)$, and all locally free $R$--modules have a well-defined rank. There is, therefore, a surjective map of sets
\begin{equation}\label{eq:6}
  \coprod_{n=1}^\infty \Hoh^1 (\PGL_n) \to \Br(\cat X, R).
  \end{equation}

\extended{Diagram \eqref{eq:5} and the identity
  \eqref{eq:4} imply that}\concise{The class} $\delta(\mathscr{A})$ depends only on the class of $\mathscr{A}$ in $\Br(\cat{X})$. We
  therefore have a factorization of \eqref{eq:1} as
  \begin{equation*}
    \coprod_{n=1}^\infty \Hoh^1 (\PGL_n) \to \Br(\cat{X}, R) \to  \Hoh^2(\Gm),
  \end{equation*}
  where the first map is the surjection \ref{eq:6} and the second map is a homomorphism of groups. Finally,
  since $\delta(\mathscr{A}) = 1$ if and only if $\mathscr{A}$ is of the form $\End_r(E)$ where $E$ is a locally free
  $R$--module, it follows that $\Br(\cat{X}, R) \to \Hoh^2(\Gm)$ is injective.
\end{proof}
}

\extended{
\subsection{Examples} \label{examples}
\begin{enumerate}
\item If $X$ is a scheme, then one may define the \'etale site of $X_\et$ as in  \cite{artin1972}*{Expos\'e VII}.\extended{ The site consists of
  $X$--schemes, $X' \to X$, that are \'etale over $X$. The topology is that generated by jointly surjective small
  families of \'etale maps. The resulting topos, $\tilde{X}_\et$, is the \'etale topos of $X$. It is connected when $X$ is
  connected for the Zariski topology: subobjects of the terminal object in $\tilde{X}_\et$ are isomorphic to Zariski open
  subsets of $X$, and the topos is connected if and only if the terminal object cannot be decomposed as a disjoint union
  of subobjects. The geometric points of $X$ endow $\tilde{X}_\et$ with enough points, \cite{artin1972}*{Expos\'e
  VIII}. The structure sheaf, $\mathcal{O}_X$, is a local ring object in $\tilde{X}_\et$, the stalks being strictly
  Hensel local rings.} The theory of Azumaya algebras in the locally ringed topos
  $(\tilde{X}_\et, \mathcal{O}_X)$ is the
  classical theory of Azumaya algebras of \cite{grothendieck1968-a}, and restricts to the theories of
  \cite{auslander1960}, \cite{azumaya1951} over rings and local rings, by taking $X = \Spec R$, and from there to the theory of central simple
  algebras over a field, by taking $R$ to be a field.

\item The construction of the \'etale site of a scheme can be extended to the lisse-\'etale site of an algebraic stack
  $\mathscr{X}$. The structure sheaf $\mathcal{O}_{\mathscr{X}}$ is a local ring object in the topos of sheaves on this
  site. For particulars, see \cite{laumon2000}*{Chapitre 12} and \cite{olsson2007}, and for discussion of the Brauer
  group of a stack: \cite{heinloth2009}.

\item If $X$ is a topological space, then one may define a topos $\cat{X}$ where the objects are sheaves on $X$. This
  topos is connected if $X$ is connected and the topos has enough points in all cases. If $\mathbb{K}$ is a topological
  field, then defining $\mathbb{K}(U)$ to be the set of continuous functions $\cat{Cont}(U, \mathbb{K})$ makes
  $\mathbb{K}(\cdot)$ a sheaf on $X$, and therefore a local ring object in $\cat{X}$. The theory of Azumaya algebras on $(\cat{X},
  \mathbb{C})$ and $(\cat{X}, \mathbb{R})$ are the theories of principal $\PGL_n(\mathbb{C})$--bundles on $X$ and
  principal $\PGL_n(\mathbb{R})$--bundles, respectively. If $X$ is a CW complex, then these coincide with the theory of principal
  $\mathrm{PU}_n$-- and $\mathrm{PO}_n$--bundles; the first of these two theories is the subject of
  \cite{antieau2011}, \cite{antieau2013}.

\item If $G$ is a profinite group, we can define the topos $\cat{B}G$ of right $G$--sets, that is to say: discrete sets
  $U$ equipped with a continuous action map $U \times G \to U$ that is compatible in the obvious ways with the group
  structure on $G$. The morphisms in $\cat{B}G$ are $G$--equivariant maps.  

 \extended{The constant-sheaf functor $\tilde \cdot: \cat{Set} \to \cat{B}G$ is the functor giving $U$ the trivial $G$--action, and is fully
  faithful, so $\cat{B}G$ is connected.

  The topos $\cat{B}G$ has property that every object decomposes as a disjoint union of orbits of $G$, and further that
  every orbit may be covered by a principal free $G$--space. In particular, every cover of the terminal object in
  $\cat{B}G$ has a refinement of the form $\coprod_{i \in I} e_iG$, where the sets $e_iG$ are isomorphic to $G$ as right
  $G$--sets. Evaluation $A \mapsto A(eG)$ at such a principal right $G$--set is the functor that forgets
  the underlying $G$--action on $A$. This functor forms part of a geometric morphism $\cat{Set} \to \cat{B}G$, having
  the free $G$--object functor as a left adjoint. The topos $\cat{B}G$ therefore has $\{v\}$ as a conservative set of
  points, and moreover two objects of $\cat{B}G$ are locally isomorphic if and only if there is an isomorphism between them after the
  $G$--action is forgotten.

  For any ring $R$ with a $G$--action the associated ring object in $\cat{B}G$ is a local ring object if and only if
  the ring $R$ is local.}

  Two particular cases of locally ringed topoi $(\cat{B}G, R)$ are especially noteworthy, and we enumerate them separately. 

\item First, if $k^{\mathrm{sep}}/k$ is a separable closure of fields with Galois group $G$, then the topos $\cat{B}G$
  equipped with the ring $k^{\mathrm{sep}}$, on which $G$ has a Galois action, is equivalent as a locally ringed topos
  to $\widetilde{(\Spec k)}_\et$ ringed by $\mathcal{O}_{k}$, so the theory of Azumaya algebras in this instance is the theory of
  central simple $k$--algebras.

\item Second, if $R$ is a local ring given trivial right $G$--action, then a principal $\PGL_n$--bundle on the locally
  ringed topos $(\cat{B}G, R)$ is equivalent to a right $G$--set structure on the set $\PGL_n(R)$ compatible with the
  left $\PGL_n(R)$--structure on $\PGL_n(R)$ itself, this amounts to a continuous homomorphism of groups $\phi: G \to
  \PGL_n(R)$. When $R= \mathbb{C}$ and $G$ is a finite group, the Brauer group $\Br(\cat{B}G, \mathbb{C})$ is the Schur
  multiplier of $G$. The basic theory of projective representations of finite groups is set
  out in~\cite{karpilovsky}, and a treatment of the period--index problem in this setting is given in~\cite{higgs1988}.
\end{enumerate}}

\medskip
\extended{
\subsection{Aside on Topoi that are not Connected} \label{ss:disconnect2}

In general, a proposition similar to Proposition \ref{p:BrtoH2Gm} holds, but where the objects $\PGL_n$ are replaced by objects
$\PGL_{c}$ where $c$ is a global section of the constant sheaf $\tilde \ZZ_{\ge 0}$.  This allows us to identify $\Br(\cat{X}, R)$ with
a subgroup of $\Hoh^2(\Gm)$ in all cases.

If every locally free $R$--module $E$ of locally constant rank can be extended to a locally free module $E \oplus R^c$ of constant rank,
where $c$ is a global section of the constant sheaf $\tilde \ZZ_{\ge 0}$, and if every Azumaya algebra $\mathscr{A}$ can be extended to an
Azumaya algebra $\mathscr{A} \tensor \Mat_{c'}(R)$ of constant degree, again where $c'$ is a global section of the constant sheaf $\tilde
\ZZ_{\ge 0}$, then $\Br(\cat X, R)$ agrees with the image of the map \eqref{eq:1} as written. This is the case if all global sections of
$\tilde \ZZ_{\ge 0}$, i.e.,~all maps $\ast \to \tilde \ZZ_{\ge 0}$, factor through some map $\ast \to \tilde n$, where $\tilde n$ denotes the
constant sheaf associated to $\{0, 1, \dots, n\}$. Such a factorization is guaranteed if the pro-set $\pi_0(\cat X)$ is compact.
}

\section{Period \& Index}
Henceforth we assume our topos locally-ringed and connected.

Suppose $\alpha$ is an element of $\Br(\cat X,R) \subset \Hoh^2(\Gm)$. We define the \textit{period} of $\alpha$ to be the
order of $\alpha$ as a group element in $\Hoh^2(\Gm)$, assuming it is finite, and we define the \textit{index} of $\alpha$ to be the
greatest common divisor of all integers $n$ such that $\alpha$ is in the image of $\Hoh^1(\PGL_n)
\to \Hoh^2(\Gm)$. If $\mathscr{A}$ is an element in $\Hoh^1(\PGL_n)$, then we say that the
\textit{degree} of $\mathscr{A}$ is $n$, and we abuse terminology in saying that the period and
index of $\mathscr{A}$ are simply the period and index of the image of $\mathscr{A}$ under the map
\[ \Hoh^1(\PGL_n) \to \Br(\cat{X},R).\] Writing $\alpha$ for this image, we say $\mathscr{A}$
\textit{represents} $\alpha$.

\begin{theorem} \label{p1} If $(\cat X,R)$ is a ringed topos and if $\alpha \in \Br(\cat X,R)$ is an element in the Brauer group
  represented by $\mathscr{A}$, then $\per(\alpha)$ divides the degree of $\mathscr{A}$. Consequently,
  $\per(\alpha)|\ind(\alpha)$, and $\Br(\cat X,R)$ is  torsion.
\end{theorem}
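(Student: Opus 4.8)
The plan is to pass to cohomology via Proposition~\ref{p:BrtoH2Gm}: under the resulting identification $\alpha$ is the image $\delta(\mathscr A)\in\Hoh^2(\Gm)$ of $\mathscr A\in\Hoh^1(\PGL_n)$ under the connecting map $\delta$ of \eqref{eq:5}, where $n=\deg\mathscr A$. As $\per(\alpha)$ is the order of $\alpha=\delta(\mathscr A)$ in the abelian group $\Hoh^2(\Gm)$, the whole theorem reduces to the single relation
\[ n\cdot\delta(\mathscr A)=0\qquad\text{in }\Hoh^2(\Gm). \]
Granting this, the remaining assertions are formal: since $\ind(\alpha)$ is the greatest common divisor of the degrees of all representatives of $\alpha$ and $\per(\alpha)$ divides each such degree, $\per(\alpha)\mid\ind(\alpha)$; and since every class admits a representative of some finite degree, every class has finite order, so $\Br(\cat X,R)$ is torsion.

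To establish the displayed relation I would assemble three ingredients. First, the $n$-th power map $\epsilon_n\colon\Gm\to\Gm$ induces multiplication by $n$ on $\Hoh^2(\Gm)$, by the additivity of the cohomology of an abelian group object; hence $n\cdot\delta(\mathscr A)=(\epsilon_n)_*\delta(\mathscr A)$. Second, I would use the naturality of the connecting map under change of abelian coefficients: for the central extension $1\to\Gm\to\GL_n\to\PGL_n\to1$ that defines $\delta$, and any homomorphism $f\colon\Gm\to\Gm$, the connecting map of the extension pushed forward along $f$ equals $f_*\circ\delta$ (this is part of the formalism of \cite{giraud1971-a}). Taking $f=\epsilon_n$ identifies $(\epsilon_n)_*\delta(\mathscr A)$ with the image of $\mathscr A$ under the connecting map of the $\epsilon_n$-pushout of that extension.

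The crux is the third ingredient: this pushout extension is split, so its connecting map is identically zero. The splitting is furnished by the determinant. The central inclusion $\Gm\to\GL_n$ sends $\lambda$ to the scalar matrix $\lambda\cdot\id$, and $\det(\lambda\cdot\id)=\lambda^n=\epsilon_n(\lambda)$; thus $\det\colon\GL_n\to\Gm$ is a homomorphism whose restriction to the central $\Gm$ is exactly $\epsilon_n$. Such an extension of $\epsilon_n$ from the central subgroup to all of $\GL_n$ is precisely a retraction of the $\epsilon_n$-pushout, which therefore splits. Consequently $(\epsilon_n)_*\delta(\mathscr A)=0$, so $n\cdot\delta(\mathscr A)=0$ and $\per(\alpha)\mid n=\deg\mathscr A$.

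I expect the main obstacle to be the bookkeeping in the second step, namely phrasing and justifying the naturality of the nonabelian-to-abelian map $\delta\colon\Hoh^1(\PGL_n)\to\Hoh^2(\Gm)$ under the coefficient homomorphism $\epsilon_n$, since $\Hoh^1(\PGL_n)$ is only a pointed set and the notions of pushing forward an extension and its effect on $\delta$ must be read off from the gerbe or crossed-module description rather than from ordinary homological algebra. Once this functoriality is secured, the determinant splitting makes the vanishing immediate. If one prefers to avoid the abstract functoriality, an equivalent route is to choose a cover trivializing $\mathscr A$, lift the $\PGL_n$-valued transition data to $\GL_n$, represent $\delta(\mathscr A)$ by the resulting $\Gm$-valued $2$-cocycle, and then observe that applying $\det$ to the $\GL_n$-lifts exhibits $n$ times this cocycle as a coboundary; this is more calculational but bypasses the general machinery.
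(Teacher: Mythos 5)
Your proof is correct and is essentially the paper's own argument: the paper gives the standard proof (citing Giraud, Chapitre V.4.6 for the gerbe-language version), which rests on exactly your two ingredients, namely that $\epsilon_n$ induces multiplication by $n$ on $\Hoh^2(\Gm)$ and that $\det\colon\GL_n\to\Gm$ restricts on the central $\Gm$ to $\epsilon_n$, i.e.\ $\det(\lambda\cdot\id)=\lambda^n$. Where you split the $\epsilon_n$-pushout of $1\to\Gm\to\GL_n\to\PGL_n\to 1$ by the retraction furnished by the determinant, the paper's version of the same determinant trick factors $(\times n)\circ\delta$ through the two consecutive maps $\Hoh^1(\nu_n)\to\Hoh^2(\Gm/\mu_n)\to\Hoh^2(\Gm)$ in the long exact sequence of $1\to\Gm/\mu_n\to\Gm\to\nu_n\to 1$, which vanishes by exactness---a cosmetic repackaging of your splitting argument, with the remaining deductions ($\per(\alpha)\mid\ind(\alpha)$, torsionness) identical.
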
 
\extended{\begin{proof}
  By reference to diagram \eqref{diagram:PGPS}, and noting that $\epsilon_n$ induces multiplication by $n$ in
  cohomology, we see that thereThere is a factorization of the composite map 
\[ \xymatrix@C=40pt{\Hoh^1(\PGL_n)
  \ar^{\iota}[r] & \Hoh^2(\Gm) \ar^{\times n}[r] & \Hoh^2(\Gm)} \] 
as \[ \xymatrix{\Hoh^1(\PGL_n) \ar[d] \\
    \Hoh^1(\nu_n) \ar[r] & \Hoh^2(\Gm/\mu_n) \ar[r] & \Hoh^2(\Gm),} \] which is necessarily the trivial map. As a
  consequence, the image of $\iota: \Hoh^1(\PGL_n) \to \Hoh^2(\Gm)$ is $n$--torsion, and it follows that $\per(\alpha) |
  \deg(\mathscr{A})$ if $\mathscr{A}$ is any representative of $\alpha$. Since $\ind(\alpha)$ is the greatest common
  divisor of all such $\mathscr{A}$, the result follows. 
\end{proof}}

\concise{The proof is the same as in the case of the \'etale site of a scheme, where it is
standard. For a proof in the language of gerbes, see \cite{giraud1971-a}*{Chapitre V.4.6}.}

\begin{proposition} \label{pr:summation}
  Suppose $\mathscr{A}$ in $\Hoh^1(\PGL_n)$  and $\mathscr{A}'$ in $\Hoh^1(\PGL_m)$ each represent the same element, $\alpha$, in
  $\Br(\cat{X}, R)$. There exists an element $\mathscr{A} \oplus \mathscr{A}'$ in $\Hoh^1(\PGL_{n+m})$ representing $\alpha$.
\end{proposition}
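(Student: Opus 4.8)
The plan is to build $\mathscr A\oplus\mathscr A'$ as the pushforward of a torsor for an intermediate group that remembers both $\mathscr A$ and $\mathscr A'$ at once. Write $\Gm\hookrightarrow\GL_n\times\GL_m$ for the central copy of $\Gm$ embedded as the scalar pairs $\lambda\mapsto(\lambda,\lambda)$, and set $G=(\GL_n\times\GL_m)/\Gm$. The block-diagonal homomorphism $\GL_n\times\GL_m\to\GL_{n+m}$ carries this diagonal $\Gm$ into the centre of $\GL_{n+m}$, so it descends to a homomorphism $\bar b\colon G\to\PGL_{n+m}$. The group $G$ also sits in a central extension $1\to\Gm\to G\to\PGL_n\times\PGL_m\to 1$, where the kernel $(\Gm\times\Gm)/\Gm_{\mathrm{diag}}$ is identified with $\Gm$ by $(\lambda,\mu)\mapsto\lambda\mu^{-1}$. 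As in the proof of the first Proposition, these group objects and the exactness of these sequences can be obtained uniformly by pulling the corresponding objects over $(\Spec\ZZ,\mathcal O)$ back along the classifying morphism $r$, since $r^*$ preserves finite limits and all colimits.

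First I would compute the obstruction to lifting the pair $(\mathscr A,\mathscr A')\in\Hoh^1(\PGL_n\times\PGL_m)\iso\Hoh^1(\PGL_n)\times\Hoh^1(\PGL_m)$ to a $G$-torsor. The quotient $\Gm\times\Gm\to\Gm$, $(\lambda,\mu)\mapsto\lambda\mu^{-1}$, is the kernel component of a morphism from the product extension defining $\PGL_n\times\PGL_m$ to the extension defining $G$, a morphism which is the identity on the base; so naturality of the connecting map gives $\delta_G(\mathscr A,\mathscr A')=\delta(\mathscr A)-\delta(\mathscr A')$, once one uses $\Hoh^2(\Gm\times\Gm)\iso\Hoh^2(\Gm)\times\Hoh^2(\Gm)$ and the fact that inversion on $\Gm$ acts as negation on $\Hoh^2(\Gm)$. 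By Proposition \ref{p:BrtoH2Gm}, the hypothesis that $\mathscr A$ and $\mathscr A'$ both represent $\alpha$ means $\delta(\mathscr A)=\delta(\mathscr A')$, hence $\delta_G(\mathscr A,\mathscr A')=0$, and exactness of the long exact sequence of the central extension produces a lift $\mathscr B\in\Hoh^1(G)$.

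I would then define $\mathscr A\oplus\mathscr A':=\bar b_*\mathscr B\in\Hoh^1(\PGL_{n+m})$ and check that it represents $\alpha$. By Proposition \ref{p:BrtoH2Gm} it suffices to show $\delta(\bar b_*\mathscr B)=\alpha$ in $\Hoh^2(\Gm)$, and by naturality of the connecting map $\delta$ (cf. \eqref{eq:5}) this is the connecting map of the pullback along $\bar b$ of the extension $1\to\Gm\to\GL_{n+m}\to\PGL_{n+m}\to 1$, evaluated on $\mathscr B$. A direct identification matches this pullback with $1\to\Gm\to\GL_n\times\GL_m\to G\to 1$, the scalars $\lambda I_{n+m}$ corresponding to the diagonal pairs $(\lambda I_n,\lambda I_m)$. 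Comparing this extension, through the quotient $G\to\PGL_n\times\PGL_m$, with the product extension $1\to\Gm\times\Gm\to\GL_n\times\GL_m\to\PGL_n\times\PGL_m\to 1$—the inclusion of kernels now being the diagonal $\Gm\hookrightarrow\Gm\times\Gm$—naturality of $\delta$ together with the product decomposition of $\Hoh^2$ forces $\delta(\bar b_*\mathscr B)=\delta(\mathscr A)=\alpha$, as required.

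The one delicate point, and the place I expect to spend the most care, is the sign bookkeeping between the two central copies of $\Gm$. The lift $\mathscr B$ exists precisely because the relevant obstruction is the \emph{difference} $\delta(\mathscr A)-\delta(\mathscr A')$, coming from the antidiagonal identification $(\lambda,\mu)\mapsto\lambda\mu^{-1}$, whereas the class of the resulting $\PGL_{n+m}$-torsor is the common value $\alpha$ itself, coming from the diagonal identification $\lambda\mapsto(\lambda,\lambda)$. Verifying that the pullback of the defining extension of $\PGL_{n+m}$ along $\bar b$ really is $\GL_n\times\GL_m\to G$, with the scalar and diagonal embeddings matched as above, is what pins down these two signs and makes the argument close up; everything else is formal manipulation with connecting homomorphisms and the product decomposition of cohomology.
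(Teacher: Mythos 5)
Your proof is correct and is essentially the paper's own argument: your group $G=(\GL_n\times\GL_m)/\Gm$ is the paper's $\PGL_{n,m}$, the lifting obstruction $\delta(\mathscr{A})-\delta(\mathscr{A}')$ arising from the antidiagonal identification of the kernel is exactly what the paper extracts via the nine-lemma, and your pushforward along $\bar b$ is the paper's map induced by the direct-summation homomorphism $\sigma\colon \GL_n\times\GL_m\to\GL_{n+m}$. The final sign bookkeeping you flag (diagonal versus antidiagonal copies of $\Gm$) is handled correctly and matches the paper's commuting diagram of extensions with the identity on $\Gm$.
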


\begin{proof}
  Suppose $\mathscr{A}$ in $\Hoh^1(\PGL_n)$ and $\mathscr{B}$ in $\Hoh^1(\PGL_m)$ represent $\alpha$ and $\beta$ in
  $\Br(\cat{X},R)$, respectively.
  \extended{As remarked above, there}\concise{There} is an isomorphism $\Hoh^1(\PGL_n \times \PGL_m) \to \Hoh^1(\PGL_n) \times \Hoh^1(\PGL_m)$, \cite{giraud1971-a}. The data of $\mathscr{A}$ and $\mathscr{B}$ therefore give an element $\mathscr{A} \times
  \mathscr{B}$ in $\Hoh^1(\PGL_n \times \PGL_m)$.
  
  We may include $\Gm\xrightarrow{\Delta} \GL_n \times \GL_m$ as the subgroup of scalar matrices, and we define $\PGL_{n,m}$ as
  the quotient. We also write $\Delta: \Gm \to \Gm \times \Gm$ for the diagonal inclusion. There is a short exact
  sequence of group objects
  \[ \xymatrix{ 1 \ar[r] & \Gm \ar^<<<<{\Delta}[r] & \Gm \times \Gm \ar^<<<<{q}[r] & \Gm \ar[r] & 1 } \]
   where the map $q: \Gm \times \Gm \to \Gm$ is that given by $(\lambda, \lambda') \mapsto \lambda/\lambda'$; this map is
  split, and consequently an epimorphism.

  The rows and first two columns of the following diagram are short exact sequences of groups:
  \[ \xymatrix{ & 1 \ar[d] & 1 \ar[d] & 1 \ar[d] \\ 1 \ar[r] &\Gm \ar^<<<<<<{\Delta}[r] \ar@{=}[d] & \Gm \times \Gm \ar[r] \ar[d] & \Gm
  \ar[d] \ar[r] & 1 \\
1 \ar[r] & \Gm \ar^<<<<<<\Delta[r] \ar[d] & \GL_n \times \GL_m \ar[r] \ar[d] & \PGL_{n,m} \ar[d] \ar[r] & 1\\
1 \ar[r] & 1 \ar[d] \ar[r] & \PGL_n \times \PGL_n \ar@{=}[r] \ar[d] & \PGL_n \times \PGL_m \ar[r] \ar[d] & 1 \\ & 1 & 1 & 1.} \]
By the nine-lemma, the third column is also exact. We conclude that the obstruction to lifting $\mathscr{A} \times
\mathscr{B}$ from $\Hoh^1(\PGL_n \times \PGL_m)$ to $\Hoh^1(\PGL_{n,m})$ is the class $\alpha - \beta$ in $\Hoh^2(\Gm)$.

If we take $\mathcal{A}$ and $\mathcal{A}'$, both of which represent $\alpha \in \Br(\cat{X}, R)$, then this
obstruction vanishes, and we may define $\mathscr{A}''$ to be a lift of $\mathscr{A} \times \mathscr{A}'$ to
$\Hoh^1(\PGL_{n,m})$.

There is a `direct-summation' map $\sigma: \GL_n \times \GL_m \to \GL_{n+m}$. The diagram
\[ \xymatrix{ 1 \ar[r] & \Gm \ar@{=}[d] \ar^<<<<<\Delta[r] & \GL_n \times \GL_m \ar[r] \ar^\sigma[d] & \PGL_{n,m} \ar[d] \ar[r] & 1
  \\ 1 \ar[r] & \Gm \ar[r] & \GL_{n+m} \ar[r] & \PGL_{n+m} \ar[r] &  1 } \]
commutes, from which we deduce that $\mathscr{A}''$ yields an element $\mathscr{A}\boxplus\mathscr{A}'$ in
$\Hoh^1(\PGL_{n+m})$. This element represents $\alpha$, since $\mathscr{A}''$ does.
\end{proof}

We write $\Supp n$ for the set of prime numbers dividing an integer $n$.

\begin{lemma} \label{l:ntl} Let $m$ and $n$ be positive integers, with $m \mid n$. Then there exists
a set of integers $\{q_1, \dots, q_\ell\}$ with $1 \le q_i <n $ and $(q_i, m) = 1$ for all $i$ and
such that
\begin{equation}\label{eq:2}
\Supp \gcd\left\{ \binom{n}{q_1}, \dots, \binom{n}{q_\ell} \right\} = \Supp m.
\end{equation}
\end{lemma}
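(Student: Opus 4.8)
The plan is to determine, one prime at a time, which primes divide $\gcd\{\binom{n}{q_1},\dots,\binom{n}{q_\ell}\}$, and to choose the $q_i$ so that this set of primes is exactly $\Supp m$. The one tool I need is Lucas's theorem: for a prime $p$, one has $p\nmid\binom{n}{q}$ if and only if every base-$p$ digit of $q$ is at most the corresponding digit of $n$ (equivalently, by Kummer, there are no carries when $q$ and $n-q$ are added in base $p$). I also use the identity $q\binom{n}{q}=n\binom{n-1}{q-1}$.

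The inclusion $\Supp m\subseteq\Supp\gcd\{\dots\}$ is the easy half, and it holds for \emph{any} admissible choice of the $q_i$. Fix a prime $p\mid m$; since $m\mid n$ we have $p\mid n$. For any $q$ with $\gcd(q,m)=1$ we have $p\nmid q$, so comparing $p$-adic valuations in $q\binom{n}{q}=n\binom{n-1}{q-1}$ gives $v_p\binom{n}{q}\ge v_p(n)\ge 1$, that is $p\mid\binom{n}{q}$. Hence, as long as the set $\{q_i\}$ is nonempty and each $q_i$ is prime to $m$, every prime dividing $m$ divides every $\binom{n}{q_i}$ and so divides their gcd.

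The substance of the lemma is the reverse inclusion: for each prime $p\nmid m$ I must produce one $q_i$, prime to $m$ and with $1\le q_i<n$, for which $p\nmid\binom{n}{q_i}$. Only finitely many primes need attention, since for $p>n$ no $\binom{n}{q}$ with $0\le q\le n$ is divisible by $p$ (all of $q$, $n-q$, $n$ are single base-$p$ digits, so there are no carries). For a prime $p\le n$ with $p\nmid m$ I take $q_p=p^{\,v_p(n)}$: this is a power of $p$ and hence prime to $m$, its base-$p$ expansion has a single nonzero digit equal to $1$ in position $v_p(n)$, where the digit of $n$ is itself nonzero, so $p\nmid\binom{n}{q_p}$ by Lucas. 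Letting $\{q_i\}$ consist of these $q_p$ together with $q=1$ (which, via $\binom{n}{1}=n$, at once excludes every prime not dividing $n$) then gives a set no prime outside $\Supp m$ divides in the gcd, while the previous paragraph supplies every prime of $\Supp m$.

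The only step needing care, and the genuine obstacle, is verifying that $q_p=p^{\,v_p(n)}<n$. This fails precisely when $n$ is itself a power of $p$; but then $m\mid n$ and $p\nmid m$ force $m=1$, the degenerate case in which $n$ is a prime power and \eqref{eq:2} holds exactly when $n$ is not a prime power (the classical evaluation of $\gcd_{0<q<n}\binom{n}{q}$), and which does not arise in the intended application since there $m=\per(\alpha)\ge 2$. For $m>1$ the obstacle evaporates: any prime $p\nmid m$ dividing $n$ is accompanied by a prime factor of $m$ distinct from $p$ that also divides $n$, so $n$ has at least two distinct prime factors and $p^{\,v_p(n)}<n$, which completes the construction.
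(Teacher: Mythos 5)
Your proof is correct, and its core construction is the same as the paper's: the paper also takes the set $\{1\}\cup\{p^{v_p(n)}: p\mid n,\ p\nmid m\}$, checking that $p\nmid\binom{n}{p^{v_p(n)}}$ while $q\mid\binom{n}{p^{v_p(n)}}$ for every prime $q\mid m$, via the congruence $(1+x)^{cp^s}\equiv(1+x^{p^s})^c \pmod p$ --- which is just an inline proof of the instance of Lucas's theorem you invoke. Your identity $q\binom{n}{q}=n\binom{n-1}{q-1}$ is a different (and slightly stronger, valuation-level) route to the easy inclusion $\Supp m\subseteq\Supp\gcd$, where the paper reuses the polynomial congruence; this is a cosmetic difference only.

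The place where you genuinely add something is your last paragraph. You are right that $p^{v_p(n)}<n$ fails exactly when $n=p^k$, that $m\mid n$ and $p\nmid m$ then force $m=1$, and that in this degenerate case the lemma as stated is actually \emph{false}: every $\binom{p^k}{q}$ with $0<q<p^k$ is divisible by $p$, so no admissible set can have gcd with empty support. The paper's own proof silently trips on the same point --- its proposed set contains $p^{s}=n$, violating the constraint $1\le q_i<n$ (and for $n=1$ even the element $1$ is inadmissible). One small correction to your closing remark: $\per(\alpha)=1$ does arise, namely for the trivial Brauer class, so one cannot say $m\ge 2$ in the application; but Theorem \ref{th:main} is then trivially witnessed by $\mathscr{A}=\Mat_1(R)$, so the gap is harmless there, and your fix (for $m>1$ the integer $n$ has at least two prime divisors, so $p^{v_p(n)}<n$) is exactly the right repair. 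In short: a correct proof, the same construction as the paper's, plus an honest and accurate accounting of a boundary case the paper overlooks.
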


If $\{q_1, \dots, q_\ell\}$ is a set meeting the conditions of the lemma, and if $q_{\ell +1}$ is some number such that $1 \le
q_{\ell+1} < n $ and $(q_{\ell+1} , m) = 1$, then it follows from the proof that $\{q_1, \dots, q_{\ell}, q_{\ell+1}\}$ also meets the conditions of
the lemma. The lemma could therefore be stated as saying that the maximal set $\{ q \: : \: 1 \le q < n,\,(q,m) =1 \}$ satisfies
\eqref{eq:2}.

 \begin{proof} Suppose $a$ and $b$ are two positive integers and $p$ is a prime. Then the
value of $\binom{a}{b}$ in $\ZZ/(p\ZZ)$ is the coefficient of $x^b$ in the expansion of $(1+x)^a$
over that ring. If $a=cp^s$ for some integer $s$, then \[(1+x)^a = \Big((1+x)^{p^s}\Big)^c \equiv
(1+x^{p^s})^c \pmod{p} \] from which we deduce that $\binom{a}{b}\equiv 0 \pmod{p}$ unless $p^s$
divides $b$ as well, in which case $\binom{a}{b} \equiv \binom{c}{b/p^s} \pmod{p}$.

Let $\{p_1, \dots, p_\ell\}$ denote the set of primes dividing $n$ but not dividing $m$. Let $s_i$
denote the exponent of the largest power of $p_i$ dividing $n$. The binomial coefficient
$\binom{n}{p_i^{s_i}} \equiv \binom{n/p_i^{s_i}}{1} \not \equiv 0 \pmod{p}$, while, for any prime
$q$ dividing $m$, we have $\binom{n}{p_i^{s_i}} \equiv 0 \pmod{q}$. The set $\{1, p_1^{s_1},
\dots, p_\ell^{s_\ell}\}$ therefore satisfies the assertions of the lemma. \end{proof}

Now we come to our main theorem, where we show that~\eqref{3} from the introduction, and hence~\eqref{2}, holds for a broad class of
locally ringed topoi.

\begin{theorem} \label{th:main}
  Let $(X,R)$ be a locally-ringed connected topos and let $\alpha \in \Br(X,R)$. There exists a
  representative $\mathscr{A}$ of $\alpha$ such that the prime numbers dividing $\per(\alpha)$ and $\deg(\mathscr{A})$ coincide.
\end{theorem}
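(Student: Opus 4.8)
The plan is to start from any representative of $\alpha$, use exterior-power operations to manufacture a supply of representatives whose degrees are binomial coefficients, then correct the Brauer class back to $\alpha$ with a tensor power, and finally combine these by direct sum so that the control on greatest common divisors supplied by Lemma~\ref{l:ntl} passes to the degree of a single representative.

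First I would fix a representative $\mathscr{A} \in \Hoh^1(\PGL_n)$ of $\alpha$, which exists by Proposition~\ref{p:BrtoH2Gm}, and set $m = \per(\alpha)$; by Theorem~\ref{p1} we have $m \mid n$. If $m = 1$ the class is trivial and $R$ itself is a degree-$1$ representative, so I assume $m > 1$. The essential computation is the effect of the exterior-power homomorphism $\wedge^q \colon \GL_n \to \GL_{\binom{n}{q}}$ on the central $\Gm$: a scalar $\lambda$ acts on $\left(R^n\right)^{\wedge q}$ as $\lambda^q$, so $\wedge^q$ carries the central $\Gm$ of $\GL_n$ into that of $\GL_{\binom{n}{q}}$ by the $q$-th power map $\epsilon_q$ and descends to a morphism of the defining short exact sequences of $\PGL_n$ and $\PGL_{\binom{n}{q}}$ whose left vertical arrow is $\epsilon_q$. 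By naturality of the connecting map $\delta$ together with the fact that $\epsilon_q$ induces multiplication by $q$ on $\Hoh^2(\Gm)$, the image $\wedge^q \mathscr{A} \in \Hoh^1(\PGL_{\binom{n}{q}})$ represents $q\alpha$.

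Next I would correct the class. For each $q$ with $(q,m) = 1$ choose a positive integer $c_q$ with $c_q q \equiv 1 \pmod{m}$; since $\delta$ is additive under Kronecker product, the tensor power $\mathscr{B}_q := (\wedge^q \mathscr{A})^{\otimes c_q}$ lies in $\Hoh^1(\PGL_{\binom{n}{q}^{c_q}})$ and represents $c_q q\,\alpha = \alpha$, because $m = \per(\alpha)$. Thus for every $q$ coprime to $m$ with $1 \le q < n$ I obtain a genuine representative $\mathscr{B}_q$ of $\alpha$ whose degree $D_q = \binom{n}{q}^{c_q}$ satisfies $\Supp D_q = \Supp\binom{n}{q}$. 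Applying Lemma~\ref{l:ntl} to the divisibility $m \mid n$ produces a finite set $q_1, \dots, q_\ell$ with $\Supp \gcd\{\binom{n}{q_1}, \dots, \binom{n}{q_\ell}\} = \Supp m$; since a prime divides $\binom{n}{q_i}^{c_{q_i}}$ exactly when it divides $\binom{n}{q_i}$, writing $g = \gcd\{D_{q_1}, \dots, D_{q_\ell}\}$ gives $\Supp g = \Supp m$.

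Finally I would convert this gcd into an honest degree. Iterating Proposition~\ref{pr:summation} lets me form direct sums $\boxplus$ of the representatives $\mathscr{B}_{q_i}$ of $\alpha$, realizing as the degree of a representative of $\alpha$ any positive integer combination $\sum_i a_i D_{q_i}$ with $a_i \ge 0$. Because $\gcd\{D_{q_i}/g\} = 1$, the Sylvester--Frobenius theorem guarantees that every sufficiently large multiple of $g$ is such a combination; in particular $g^N$ is realized for $N$ large, and $\Supp g^N = \Supp g = \Supp m = \Supp \per(\alpha)$, giving the desired representative. The main obstacle is conceptual rather than computational: exterior powers are exactly what produce the binomial degrees needed to exploit the Lemma, but they shift the Brauer class by a factor, and one must notice both that this factor, being coprime to the period, is undone by a tensor power, and that the greatest common divisor furnished by the Lemma becomes a genuine degree only after passing through the \emph{additive} direct-sum operation together with a numerical-semigroup argument.
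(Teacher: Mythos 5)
Your proposal is correct and follows essentially the same route as the paper's proof: the paper likewise forms $W_i = \left(V^{\wedge q_i}\right)^{\tensor r_i}$ with $q_i r_i \equiv 1 \pmod{m}$ (your $\wedge^{q}$ followed by the $c_q$-fold Kronecker power, packaged as one diagonal homomorphism $\GL_n \to \GL_{s_i}$ under which the central $\Gm$ maps by $z \mapsto z^{q_i r_i}$), invokes Lemma~\ref{l:ntl} and Proposition~\ref{pr:summation}, and closes with the same numerical-semigroup step, realizing a large $N$ with $\Supp N = \Supp m$ as $\sum_i c_i s_i$. Your explicit choice $g^N$ and your separate treatment of the trivial case $m=1$ are minor presentational variants of the paper's argument.
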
 
\begin{proof}
  Write $m$ for the period of $\alpha$, which is finite and divides $\ind(\alpha)$ by Theorem \ref{p1}. By definition of
  the Brauer group, there exists some positive integer $n$ and some $\mathscr{B} \in \Hoh^1(\PGL_n)$ such that
  $\mathscr{B}$ represents $\alpha$. By Theorem \ref{p1}, we know that $m|n$.

  Let $V$ denote the standard free $R$--module of rank $n$. Let $\{q_1, \dots, q_\ell\}$ be a set of integers $1 \le q_i < n$ with the
  properties that $(q_i, m) = 1$ for all $i$, and
\[ \Supp\gcd\left\{\binom{n}{q_1}, \dots, \binom{n}{q_\ell} \right\} =\Supp m.\] For each $i$, let $r_i$ be a positive
integer such that $q_ir_i \equiv 1 \pmod{m}$. For each $i$ between $1$ and $\ell$, define 
\[ W_i = \left(V^{\wedge {q_i}}\right)^{\tensor r_i}. \] 
The dimension of $W_i$ is \[ s_i = \binom{n}{q_i}^{r_i} \] and in particular 
\[\Supp \gcd \{s_1, \dots, s_\ell \} = \Supp m. \]

The formation of $W_i$ from $V$ means that there is a diagonal
homomorphism from $\GL(V) = \GL_n$ to $\GL(W_i) = \GL_{s_i}$, and this fits in
the following diagram \[ \xymatrix{ 1 \ar[r] & \Gm \ar_{z \mapsto z^{q_ir_i}}[d] \ar[r] & \GL_n  \ar[d] \ar[r] & \PGL_n \ar[d] \ar[r] & 1 \\
1 \ar[r] & \Gm \ar[r] & \GL_{s_i} \ar[r] & \PGL_{s_i} \ar[r] &1} \]
In particular, there is an induced map $f_i: \Hoh^1(\PGL_n) \to \Hoh^1(\PGL_{s_i})$ with the property that
$f_i(\mathcal{B})$ represents $q_ir_i\alpha = \alpha$ in $\Hoh^2(\Gm)$.

For any sufficiently large integer $g$ divisible by $\gcd \{ s_1 , \dots, s_\ell \}$ we can find
nonnegative integers $\{c'_1, \dots, c'_\ell \}$ such that \[ g = \sum_{i=1}^\ell c'_i s_i. \] In
particular, we can find some sufficiently large integer $N$ such that $\Supp N = \Supp m$ and \[ N =
\sum_{i=1}^\ell c_i s_i \] where the $c_i$ are nonnegative integers.

The elements $f_i(\mathscr{B})$ in $\Hoh^1(\PGL_{s_i})$ all represent $\alpha$, and by the construction of Proposition
\ref{pr:summation}, we can form 
\[ \mathscr{A} = \bigoplus_{i=1}^\ell \Big( \bigoplus_{j=1}^{c_i} f_i(\mathscr{B}) \Big), \] 
of degree $\deg(\mathscr{A}) = N$, which represents $\alpha$ as well. It lies in $\Hoh^1(\PGL_N)$. Since $\Supp N = \Supp m$, the theorem
is proved.
\end{proof}

We note that the bound on $\deg(\mathscr{A})$ implicit in the proof does not depend on the topos, and is probably
wildly inefficient in many interesting cases. For instance, in the case of an element $\alpha$ of
period $3$, represented by a class $\mathscr{A}$ of degree $60$, we must eliminate the primes $2$
and $5$. We may take as our set $\{q_1, q_2, q_3, q_4\} = \{ 1, 4, 55, 58 \}$, all of which are
congruent to $1$ modulo $3$, which means that we may take $r_1=r_2=r_3=r_4 = 1$. Setting 
\[ c_1= 137400, \quad c_2=1, \quad c_3=1, \quad c_4 = 88, \] and using the identity
\[N = 3^{15} = 137400 \binom{60}{1} + \binom{60}{4} +\binom{60}{55} + 88\binom{60}{58}, \]
we deduce that an element of $\Br(X,R)$ having period $3$ which is represented by $\mathscr{B}$ of degree $60$ may be
represented by an Azumaya algebra $\mathscr{A}$ of degree $3^{15}=14,348,907$.

\begin{bibdiv}
\begin{biblist}

\bib{antieau????}{article}{
      author={Antieau, Benjamin},
      author={Williams, Ben},
       title={Unramified division algebras do not always contain azumaya
  maximal orders},
        ISSN={0020-9910, 1432-1297},
     journal={Inventiones mathematicae},
     year={2013},
       pages={1\ndash 10},
         eprint={http://link.springer.com/article/10.1007/s00222-013-0479-7},
}

\bib{antieau2011}{article}{
      author={Antieau, Benjamin},
      author={Williams, Ben},
       title={The period-index problem for twisted topological $K$-theory},
        date={2014},
        no={2},
        volume={18}
     journal={Geometry \& Topology},
     pages={1115--1148}
}

\bib{antieau2013}{article}{
      author={Antieau, Benjamin},
      author={Williams, Ben},
       title={Topology and purity for torsors},
        date={2013},
     journal={ArXiv e-prints},
         eprint={http://arxiv.org/abs/1311.5273},
}

\bib{artin1972-b}{book}{
  editor={Artin, M.},
  editor={Grothendieck, A.},
  editor={Verdier, J.~L.},
  title={Th\'{e}orie des topos et cohomologie \'{e}tale des sch\'{e}mas.
  tome 1: Th\'{e}orie des topos},
      series={Lecture Notes in Mathematics, Vol. 269},
   publisher={{Springer-Verlag}},
     address={Berlin},
        date={1972},
        note={S\'{e}minaire de G\'{e}om\'{e}trie Alg\'{e}brique du {Bois-Marie}
  1963{\textendash}1964 {(SGA} 4), Dirig\'{e} par M. Artin, A. Grothendieck, et
  J. L. Verdier. Avec la collaboration de N. Bourbaki, P. Deligne et B.
  {Saint-Donat}},
}

\bib{artin1972}{book}{
      editor={Artin, M.},
      editor={Grothendieck, A.},
      editor={Verdier, J.~L.},
       title={Th\'{e}orie des topos et cohomologie \'{e}tale des sch\'{e}mas.
  tome 2},
      series={Lecture Notes in Mathematics, Vol. 270},
   publisher={{Springer-Verlag}},
     address={Berlin},
        date={1972},
        note={S\'{e}minaire de G\'{e}om\'{e}trie Alg\'{e}brique du {Bois-Marie}
  1963{\textendash}1964 {(SGA} 4), Dirig\'{e} par M. Artin, A. Grothendieck et
  J. L. Verdier. Avec la collaboration de N. Bourbaki, P. Deligne et B.
  {Saint-Donat}},
}

\bib{auslander1960}{article}{
      author={Auslander, Maurice},
      author={Goldman, Oscar},
       title={The Brauer Group of a Commutative Ring},
        date={1960},
        ISSN={0002-9947},
     journal={Transactions of the American Mathematical Society},
      volume={97},
       pages={367{\textendash}409},
         url={http://www.ams.org/mathscinet-getitem?mr=0121392},
}

\bib{azumaya1951}{article}{
      author={Azumaya, Gor\^{o}},
       title={On maximally central algebras},
        date={1951},
        ISSN={0027-7630},
     journal={Nagoya Mathematical Journal},
      volume={2},
       pages={119{\textendash}150},
         url={http://www.ams.org/mathscinet-getitem?mr=0040287},
}


\bib{gabber1981}{incollection}{
      author={Gabber, Ofer},
      editor={Kervaire, Michel},
      editor={Ojanguren, Manuel},
       title={Some theorems on azumaya algebras},
        date={1981},
   booktitle={Groupe de Brauer {(Sem.}, les {Plans-sur-Bex}, 1980)},
      series={Lecture Notes in Math.},
      volume={844},
   publisher={Springer},
     address={Berlin},
       pages={129{\textendash}209},
}

\bib{gille2006}{book}{
      author={Gille, Philippe},
      author={Szamuely, Tam\'{a}s},
       title={Central simple algebras and Galois cohomology},
      series={Cambridge Studies in Advanced Mathematics},
   publisher={Cambridge University Press},
     address={Cambridge},
        date={2006},
      volume={101},
        ISBN={978-0-521-86103-8; 0-521-86103-9},
}

\bib{giraud1971-a}{book}{
      author={Giraud, Jean},
       title={Cohomologie non ab\'{e}lienne},
   publisher={{Springer-Verlag}},
     address={Berlin},
        date={1971},
        note={Die Grundlehren der mathematischen Wissenschaften, Band 179},
}

\bib{grothendieck1968-a}{incollection}{
      author={Grothendieck, Alexander},
       title={Le groupe de brauer. i. alg\`{e}bres {d'Azumaya} et
  interpr\'{e}tations diverses},
        date={1968},
   booktitle={Dix expos\'{e}s sur la cohomologie des sch\'{e}mas},
   publisher={{North-Holland}},
     address={Amsterdam},
       pages={46{\textendash}66},
}

\extended{
\bib{heinloth2009}{article}{
	title = {The bigger Brauer group and twisted sheaves},
	volume = {322},
	issn = {0021-8693},
	number = {4},
	journal = {Journal of Algebra},
    author = {Heinloth, Jochen}
    author = {Schr\"{o}er, Stefan},
	month = {aug},
	year = {2009},
	pages = {1187--1195}
}
}

\bib{higgs1988}{article}{
      author={Higgs, R.~J.},
       title={On the degrees of projective representations},
        date={1988-05},
        ISSN={0017-0895},
     journal={Glasgow Mathematical Journal},
      volume={30},
      number={2},
       pages={133\ndash 135},
}


\extended{
\bib{karpilovsky}{book}{
    author={Karpilovsky, Gregory},
    title={Projective representations of finite groups},
    series={Monographs and Textbooks in Pure and Applied Mathematics},
    volume={94},
    publisher={Marcel Dekker Inc.},
    place={New York},
    date={1985},
    pages={xiii+644},
}

\bib{laumon2000}{book}{
	series = {Ergebnisse der Mathematik und ihrer Grenzgebiete},
	title = {Champs alg\'{e}briques},
	volume = {3. Folge, v. 39},
	isbn = {3540657614},
	publisher = {Springer},
	author = {Laumon, G\'{e}rard and {Moret-Bailly}, Laurent},
	month = {jan},
	year = {2000}
}

\bib{maclane1998}{book}{
      author={Mac~Lane, Saunders},
       title={Categories for the working mathematician},
     edition={Second},
      series={Graduate Texts in Mathematics},
   publisher={{Springer-Verlag}},
     address={New York},
        date={1998},
      volume={5},
        ISBN={0-387-98403-8},
}
}

\bib{maclane1992-a}{book}{
      author={Mac~Lane, Saunders},
      author={Moerdijk, Ieke},
       title={Sheaves in geometry and logic: a first introduction to topos
  theory},
      series={Universitext},
   publisher={{Springer-Verlag}},
     address={New York},
        date={1992},
        ISBN={0387977104},
}

\extended{
\bib{olsson2007}{article}{
	title = {Sheaves on Artin stacks},
	volume = {2007},
	issn = {0075-4102},
	number = {603},
	journal = {Journal f\"{u}r die reine und angewandte Mathematik {(Crelles} Journal)},
	author = {Olsson, Martin},
	month = {mar},
	year = {2007},
	pages = {55--112}
}
}

\end{biblist}
\end{bibdiv}

\end{document}